\documentclass[11pt]{article}

\usepackage[english]{babel}
\usepackage{geometry}
\usepackage[utf8]{inputenc}
\usepackage{mathtools,amsfonts,amsthm,mathrsfs,amssymb,stmaryrd}
\usepackage{textcomp}
\usepackage{bookmark}
\usepackage{microtype}
\usepackage{enumitem}
\usepackage{ulem}
\usepackage{todonotes}
\usepackage{fullpage}
\usepackage{verbatim}
\usepackage{cleveref}
\usepackage{authblk,enumitem}

\usepackage{tikz}
\usetikzlibrary{decorations.pathreplacing,decorations.markings}
\usepackage{standalone}

\newtheorem{thm}{Theorem}[section]
\newtheorem{lemma}[thm]{Lemma}
\newtheorem{proposition}[thm]{Proposition}
\newtheorem{corollary}[thm]{Corollary}
\newtheorem{conj}{Conjecture}
\newtheorem{problem}{Problem}

\theoremstyle{definition}
\newtheorem*{definition*}{Definition}

\newcommand{\eps}{\varepsilon}

\newcommand{\aG}{\vec{G}}

\newcommand{\E}{\mathcal{E}}

\newcommand{\aell}{\vec{\ell}}

\newcommand{\dich}{\vec{\chi}}
\newcommand{\bid}{\overset{{}_{\shortleftarrow \!\!\! \shortrightarrow}}}

\newcommand{\sG}{\mathscr{G}}

\newcommand{\pr}[1]{\mathbb{P}\left[#1\right]}
\newcommand{\esp}[1]{\mathbb{E}\left[#1\right]}
\newcommand{\sset}[2]{\left\{#1 : #2 \right \}}
\newcommand{\pth}[1]{\left(#1\right )}

\newcommand{\bigO}[1]{O\pth{#1}}
\newcommand{\floor}[1]{\left\lfloor #1 \right\rfloor}
\newcommand{\ceil}[1]{\left\lceil #1 \right\rceil}
\newcommand{\ind}[1]{[#1]}

\DeclareMathOperator{\TT}{TT}
\DeclareMathOperator{\suc}{succ}
\DeclareMathOperator{\pred}{pred}

\newcommand{\PA}[1]{{\color{blue}{\bf PA:} #1}}

\newcommand{\FP}[1]{{\color{purple}{\bf FP:} #1}}

\title{Minimum acyclic number and maximum dichromatic number of oriented triangle-free graphs of a given order}

\author[1]{Pierre Aboulker} 
\author[2]{Fr\'ed\'eric Havet}
\author[3]{Fran\c{c}ois Pirot}
\author[1]{Juliette Schabanel}

\affil[1]{DIENS, École normale supérieure, CNRS, PSL University, Paris, France}
\affil[2]{Universit\'e C\^ote d'Azur, CNRS, Inria, I3S, Sophia Antipolis, France}
\affil[3]{Université Paris-Saclay, Gif sur Yvette, France.}

\begin{document}

\maketitle

\begin{abstract}
Let $D$ be a digraph.
Its acyclic number $\vec{\alpha}(D)$ is the maximum order of an acyclic induced subdigraph and its dichromatic number $\vec{\chi}(D)$ is the least integer $k$ such that $V(D)$ can be partitioned into $k$ subsets inducing acyclic subdigraphs.
We study ${\vec a}(n)$ and $\vec t(n)$ which are the minimum of $\vec\alpha(D)$ and the maximum of 
$\vec{\chi}(D)$, respectively, over all oriented triangle-free graphs of order $n$. For every $\eps>0$ and $n$ large enough, we show
$(1/\sqrt{2} - \eps) \sqrt{n\log n} \leq \vec{a}(n) \leq \frac{107}{8} \sqrt n \log n$ and 
$\frac{8}{107} \sqrt n/\log n \leq \vec{t}(n) \leq  (\sqrt 2 + \eps) \sqrt{n/\log n}$.
We also construct an oriented triangle-free graph on 25 vertices with dichromatic number~3, and show that every oriented triangle-free graph of order at most 17 has dichromatic number at most 2.
\end{abstract}






\section{Introduction}

The \emph{order} of a graph or digraph is its number of vertices.
An \emph{independent set} in a graph is a set of pairwise non-adjacent vertices and a \emph{clique} is a 
set of pairwise adjacent vertices.
The \emph{independence number} of a graph $G$, denoted by $\alpha(G)$, is the size of a maximum independent set in $G$.
A \emph{triangle} is a clique of size $3$ and a \emph{triangle-free} graph is a graph with no triangle. 
The \emph{Ramsey number} $R(s,t)$ is the least $n$ such that every graph of order $n$ contains either a
clique of size $s$ or an independent set of size $t$. 
The \emph{inverse Ramsey number} $Q(s,n)$, is the minimum independence number of a graph of order $n$ with no clique of size $s$.
In other words, $R(s,t)=n$ if and only if  
$Q(s,n)=t$.
In particular, $R(3,t)$ is the minimum $n$ such that $\alpha(G) \geq t$ for every triangle-free graph of order $n$, and
$Q(3,n)$ the minimum of $\alpha(G)$ over all triangle-free graphs of order $n$. 
Ajtai, Koml\'os, and Szemer\'edi~\cite{AKS80} proved that $R(3,t)=O(t^2/\log t)$ and $Q(3,n) = \Omega(\sqrt{n\log n})$.
This was further tighten by Shearer~\cite{She83}, Fiz Pontiveros, Griffiths, and Morris~\cite{FGM20}, and Bohman and Keevash~\cite{BK21} :
\begin{eqnarray*}
\left(\frac{1}{4} - o(1)\right) \frac{t^2}{\log t}  & \leq ~R(3,t)~ \leq & (1+o(1)) \frac{t^2}{\log t} \\ 
\left(\frac{1}{\sqrt{2}} - o(1)\right) \sqrt{n\log n} & \leq ~Q(3,n)~ \leq & (\sqrt{2}+o(1)) \sqrt{n\log n}
\end{eqnarray*}

For some positive integer $k$, we use $[k]$ to denote the set $\{1,\ldots,k\}$. 
A \emph{$k$-colouring} of a graph (resp. digraph) $G$ is a function $\phi:V(G)\rightarrow [k]$. It is {\it proper} if $\phi(u)\neq \phi(v)$ for every edge (resp. arc) $uv \in E(G)$. We say that $G$ is {\it $k$-colourable} if it admits a proper $k$-colouring. 
The \emph{chromatic number} $\chi(G)$ of a graph or digraph $G$ is the least integer $k$ such that  $G$
is $k$-colourable.

In 1967, Erd\H{o}s~\cite{Erd67} asks for the greatest chromatic number $t(n)$ of a triangle-free graph of order $n$. 
Iteratively pulling out the largest independent set and using Ajtai-Koml\'os-Szemer\'edi bound on $Q(3,\cdot)$, Erd\H{o}s and Hajnal~\cite{ErHa85} proved
$t(n) = O(\sqrt{n/\log n})$.
Since then, several papers~\cite{ErHa85,She83,Kim95,FGM20,BK21,DaIl22+} improved the lower and upper bounds on $t(n)$. 
The best upper bound so far has been established by Davies and Illingworth~\cite{DaIl22+}:
\begin{equation} \label{eq:upper_bound_nono}
    t(n) \leq \pth{2\sqrt 2 + o(1)}\sqrt{\frac{n}{\log n}}
\end{equation}
The best lower bound was obtained by Bohman and Keevash~\cite{BK21} by analysing the triangle-free process.
\begin{equation}
    t(n) \geq \pth{\frac{1}{\sqrt{2}} - o(1)}\sqrt{\frac{n}{\log n}}
\end{equation}
So there is a factor
of $4$ between these upper and lower bounds on $t(n)$.

\medskip

The aim of this paper is to investigate similar questions for \emph{oriented triangle-free graphs}, that are oriented graphs with no triangle in their underlying graph. 

\medskip

A set of vertices is \emph{acyclic} in a digraph if it induces an acyclic subdigraph.
The \emph{acyclic number} 
of a digraph $D$, denoted by $\vec \alpha(D)$, is
the maximum size of an acyclic set in $D$, that is
the order of the largest acyclic induced subdigraph of $D$. It can be seen as a generalization of the independence number of an undirected graph.
Indeed, denoting by $\bid{G}$ the bidirected graph associated to $G$ (which is the digraph obtained from $G$ by replacing each edge by two arcs in opposite direction between its end-vertices), we trivially have
$\alpha(G) = \vec \alpha(\bid{G})$.

In 1964, Erd\H{o}s and Moser\cite{ErMo64} asked for a directed Ramsey-type problem.

\begin{problem}[Erd\H{o}s and Moser\cite{ErMo64}]
What is the least integer $\TT(k)$ such that every oriented graph on $\TT(k)$ vertices has acyclic number at least $k$ ?\\
What is $\vec \alpha(n) = \min \{
 \vec \alpha(D) \mid D \text{ is an oriented graph of order $n$}\}$ ?
 \end{problem}

By definition, $TT(k) = \min \vec \alpha^{-1}(k)$.
Note that $TT(k)$ is the least integer $n$ such that every tournament of order $n$ contains a transitive subtournament on $k$ vertices.
Erd\H{o}s and Moser\cite{ErMo64} proved 
\begin{eqnarray}
2^{(k-1)/2} & < ~\TT(k)~ \leq & 2^{k-1}\\ 
\log_2 n +1 & \leq ~\vec \alpha(n)~ \leq &  2 \log_2 n + 2
 \end{eqnarray}
Since this seminal paper, only little progress has been made on the above problem.
Using the Local Lemma, Nagy~\cite{Nagy14} slightly improved on the lower bound on $TT(k)$ by showing that, for every $C<1$,  $TT(k) > C\cdot 2^{(k+1)/2}$ when $k$ is large enough.
$TT(k)$ has been determined for small values of $k$.
Clearly $\TT(1) =1$, $\TT(2) =2$,
$\TT(3) =4$ (because of the directed $3$-cycle) and $\TT(4) =8$ (because of the Paley tournament on $7$ vertices).
Reid and Parker~\cite{RePa70} showed $\TT(5) =14$ and $\TT(6) =28$. Sanchez-Flores~\cite{San98} proved $\TT(7) =54$. 
This result implies $\TT(k)\leq 54 \times 2^{k-7}$ for all $k\geq 7$.

\medskip

A \emph{$k$-dicolouring} of a digraph $D$ is a function $\phi:V(D)\rightarrow [k]$ such that $D\ind{\phi^{-1}(i)}$ is acyclic for every $i \in [k]$. We say that $D$ is \emph{$k$-dicolourable} if it admits a $k$-dicolouring.
The \emph{dichromatic number} $\dich(D)$ of a digraph  $D$ is the least integer $k$ such that  $D$ is $k$-dicolourable. 
A digraph is \emph{$k$-dichromatic} if $\dich(D)=k$.

Analogously to Erd\H{o}s' question,  Neumann-Lara~\cite{Neu94} asked for the greatest dichromatic number $\vec \chi(n)$ of an oriented graph of order $n$.

\begin{problem}[Neumann-Lara~\cite{Neu94}]
What is $\vec \chi(n) = \max \{
 \vec \chi(D) \mid D \text{ is an oriented graph of order $n$}\}$?
 \end{problem}

Observe first that $\vec \chi(n)$ is attained by a tournament. 
Moreover, for every oriented graph $D$ of order $n$, $\vec\chi(D) \geq \frac{n}{\vec \alpha(D)}$ and a dicolouring of $D$ may be obtained by iterately pulling out the largest
acyclic set.
Hence, using the above bonds on $\vec \alpha(n)$, one easily gets
\begin{equation}
  \frac{1}{2} \frac{n}{ \log n +1} \leq \dich(n) \leq (1 + o(1)) \frac{n}{\log n}
\end{equation}
So there is a factor
of $2$ between these upper and lower bounds on $\dich(n)$. However,  for small values of $n$, $\dich(n)$ is precisely known. 
The smallest $2$-dichromatic tournament is the directed $3$-cycle. 
For every prime integer $n$ of the form $4k + 3$, the Paley tournament of order $n$ is the tournament $P_n$
whose vertex set is $\{0,\dots , n-1\}$ and containing the arc $ij$ if and only if $i-j$ is a square modulo
$n$.
Neumann-Lara~\cite{Neu94} proved that the smallest $3$-dichromatic tournament has
order 7 and that there exist four such tournaments, including $P_7$. He also proved that the
smallest $4$-dichromatic tournament has order 11, is unique and is $P_{11}$. The Paley tournament $P_{19}$ is
4-dicolourable but Neumann-Lara~\cite{NL00} showed a 5-dichromatic tournament on 19 vertices. Recently, Bellitto et al.~\cite{bellitto2023smallest} proved that it is actually a smallest one : there is no 5-dichromatic oriented graph of order less than 19.
Hence 
 \[\dich(n)\:=\left\{
    \begin{array}{ll}
      1,&\text{if $1\leq n \leq  2$,}\\
      2,&\text{if $3\leq n \leq  6$,}\\
      3,&\text{if $7\leq n \leq  10$,}\\
      4,&\text{if $11\leq n \leq  18$,}\\
      5,&\text{if $n =  19$.}\\     
    \end{array}\right.\]

\medskip

 Similar questions may be asked for subclasses of oriented graphs, and in particular for $H$-free oriented graphs for a given oriented graph $H$.
 (A digraph is \emph{$H$-free} if it does not contain $H$ as a (not necessarily induced)  subdigraph.
  In an unpublished work, Harutyunyan and McDiarmid proposed the following conjecture. 

\begin{conj}[Harutyunyan and McDiarmid \cite{HCunpublished}]
For every oriented graph $H$, there is $\epsilon > 0$ such that every $H$-free oriented graph $D$ of order $n$ satisfies $\vec \alpha(D) \geq n^{\epsilon}$ and  $\dich(D) \leq n^{1-\epsilon}$. 
\end{conj}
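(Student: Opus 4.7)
The plan is to decouple the two conclusions. A polynomial lower bound on the acyclic number of every $H$-free oriented graph yields a polynomial upper bound on the dichromatic number by iterative extraction: if every $H$-free oriented graph of order $m$ contains an acyclic set of size at least $m^{\eps}$, then greedily pulling out maximum acyclic sets (which remain valid at every step, since $H$-freeness is hereditary) produces a dicolouring with $O(n^{1-\eps})$ colours. It therefore suffices to prove the acyclic-number statement $\acyc(D) \geq n^{\eps}$ for every $H$-free oriented graph $D$ of order $n$.

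For the acyclic-number bound, I would route through a tournament Erd\H{o}s-Hajnal statement. Let $\mathcal{F}_H$ denote the finite family of \emph{tournament completions} of $H$: the tournaments obtained from $H$ by orienting the non-edges of its underlying graph arbitrarily. Every sub-tournament of $D$ is automatically $\mathcal{F}_H$-free. The natural plan is to complete $D$ into a tournament $T^{\star}$ on $V(D)$ --- for instance by orienting each non-edge of the underlying graph independently and uniformly at random --- and then to invoke the tournament Erd\H{o}s-Hajnal conjecture for the family $\mathcal{F}_H$ on $T^{\star}$, recovering a transitive sub-tournament of polynomial size which is automatically an acyclic set of $D$. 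The completion step would be controlled by a first-moment or Lov\'asz-Local-Lemma computation: one bounds the expected number of copies of each $T \in \mathcal{F}_H$ created in the random completion, and deletes or locally reorients a small fraction of vertices to kill them, losing at most a sub-polynomial factor in the order.

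The main obstacle is essentially unavoidable with current techniques: the tournament Erd\H{o}s-Hajnal conjecture is itself widely open, and by a result of Alon-Pach-Solymosi it is equivalent to the classical undirected Erd\H{o}s-Hajnal conjecture. A realistic partial program is therefore to establish the Harutyunyan-McDiarmid conjecture only for those oriented graphs $H$ whose tournament completions all lie within known Erd\H{o}s-Hajnal classes --- celebrity tournaments (Berger-Choromanski-Chudnovsky), substitution-closed families, and so on --- and then to transfer the result through the random-completion step. Even in that restricted setting, calibrating the completion so that $T^{\star}$ remains $\mathcal{F}_H$-free with positive probability while preserving a polynomial fraction of the vertices is where I expect the genuine combinatorial difficulty to lie; in particular, a naive density dichotomy on the underlying graph fails to be quantitatively balanceable because off-diagonal Ramsey bounds of the form $R(s,t) \leq O(t^{s-1})$ degrade too fast when $s$ is allowed to grow with $n$.
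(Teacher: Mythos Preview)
The statement is an open conjecture; the paper does not prove it and explicitly remarks that it is unresolved even for $H=\vec{C}_3$. So there is no ``paper's own proof'' to compare against, and your proposal should be read as a heuristic programme rather than a proof.

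Your first reduction is correct and standard: a uniform bound $\acyc(D)\ge m^{\eps}$ for all $H$-free $D$ of order $m$ does yield $\dich(D)=O(n^{1-\eps})$ by iterated extraction, since $H$-freeness is hereditary.

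The central gap is in the completion step, and it is not merely a technical calibration issue --- the approach is circular. Take the very first open case, $H=\vec{C}_3$. Then $\mathcal{F}_H=\{\vec{C}_3\}$, and an $\mathcal{F}_H$-free tournament is precisely a transitive tournament. Your plan asks for a completion $T^\star$ of $D$ that is transitive on all but a sub-polynomial set of vertices. But a set $X$ on which $T^\star$ is transitive is in particular acyclic in $D$ (since $D[X]\subseteq T^\star[X]$), so $|X|\le\acyc(D)$. Hence the ``small fraction of vertices'' you must delete is at least $n-\acyc(D)$, which is exactly the quantity you are trying to bound. Concretely, for the oriented triangle-free graphs constructed in this paper with $\acyc(D)=O(\sqrt{n}\log n)$, any completion is transitive on at most $O(\sqrt{n}\log n)$ vertices, so you would have to delete almost everything.

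More broadly, the paper notes that the Harutyunyan--McDiarmid conjecture is a \emph{strengthening} of the tournament Erd\H{o}s--Hajnal conjecture. Your programme attempts to run the implication in the reverse direction --- deducing the stronger statement from the weaker one via a completion trick --- and the example above shows why no such reduction can work in general, even conditionally on Erd\H{o}s--Hajnal. The partial results you mention (celebrity tournaments, substitution closure) concern forbidden \emph{tournaments}; they do not transfer to forbidden non-tournament oriented graphs through any known mechanism.
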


This conjecture is open even when $H$ is the directed cycle of length $3$.  In fact, it is a strengthening of the following conjecture, which is equivalent to the celebrated Erd\H{o}s-Hajnal conjecture.

\begin{conj}[Alon, Pachs, Solymosi~\cite{APS01}]
For every tournament $H$, there exists $\epsilon > 0$ such that every $H$-free tournament $T$ of order $n$ satisfies $\vec \alpha(T) \geq n^\epsilon$.
\end{conj}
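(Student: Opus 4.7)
The statement asserts polynomial-size acyclic subsets in $H$-free tournaments for every forbidden tournament $H$. This is precisely the tournament formulation of the Erdős--Hajnal conjecture, proved equivalent to the classical undirected version by Alon, Pach and Solymosi in the very paper cited; no full proof is known. The plan is therefore to sketch the two-step program that has established the statement for many, though not all, tournaments, rather than to claim a complete argument.

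The first step is a reduction to prime tournaments. Call $H$ \emph{prime} if it admits no non-trivial substitution decomposition. A classical substitution lemma says that if $H$ is obtained by substituting a tournament $H_1$ into a vertex of a tournament $H_2$, and each $H_i$ satisfies the conclusion of the conjecture with some exponent $\epsilon_i > 0$, then $H$ satisfies it with some $\epsilon = \epsilon(\epsilon_1,\epsilon_2) > 0$. The proof is a short Ramsey-type argument that alternates the hypothesis applied to $H_1$- and $H_2$-free portions of a hypothetical $H$-free tournament $T$. Together with the trivial base case $|V(H)| \leq 2$, this reduces the problem to the case where $H$ is prime.

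The second step is to handle prime tournaments directly. For each such $H$ the goal is a structural dichotomy: any $H$-free $T$ on $n$ vertices either contains a polynomially large transitive (hence acyclic) subtournament, or else exhibits a configuration that can be assembled into a copy of $H$, contradicting $H$-freeness. Concrete instances of this template include the ``galaxies'' of Berger--Choromanski--Chudnovsky and the wider families recently handled by Nguyen--Scott--Seymour using pseudo-arithmetic arguments; in all such cases one exploits additional combinatorial features of $H$ (e.g.\ the existence of a pair of vertices with a prescribed common in/out-neighbourhood pattern) to iterate and extract a transitive subtournament of size $n^\epsilon$.

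The main obstacle, and the reason the conjecture has remained open for decades, is that the dichotomy must be simultaneously valid for every prime $H$ and quantitatively strong enough to yield $n^\epsilon$, rather than the $\exp(c\sqrt{\log n})$ bound that follows from purely Ramsey-theoretic arguments. Overcoming either difficulty in full would resolve the Erdős--Hajnal conjecture itself, so realistically a concrete attack must be restricted to a tractable family of prime tournaments in which one of the two dimensions can be controlled; there is no known uniform argument handling all primes at once.
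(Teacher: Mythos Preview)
The paper does not prove this statement; it is stated explicitly as a conjecture (the tournament formulation of the Erd\H{o}s--Hajnal conjecture, due to Alon, Pach and Solymosi) and left open. You correctly recognise this, noting that no full proof is known and that the statement is equivalent to the classical Erd\H{o}s--Hajnal conjecture. Your sketch of the substitution reduction to prime tournaments and the partial results for specific families (galaxies, etc.) is accurate as a survey of the state of the art, but it is not a proof, nor does it pretend to be one---which is exactly in line with how the paper treats the statement.
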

 This conjecture is known to hold for a few
types of tournaments $H$ \cite{BCC15,BCC19}, but is still wide open in general.

\subsection*{Our results}
In this paper, we study the acyclic and dichromatic numbers of oriented triangle-free graphs.

More precisely, we give some bounds on 
\begin{enumerate}[label=(\roman*)]
\item ${\vec a}(n)$, the minimum of $\vec\alpha(D)$ over all oriented triangle-free graphs of order $n$, and  
\item $\vec t(n)$, the maximum dichromatic number of an oriented triangle-free graph of order $n$. 
\end{enumerate}

By definition, ${\vec a}(n) \geq Q(3,n)$, and so
${\vec a}(n) \geq \left(\frac{1}{\sqrt{2}} - o(1)\right) \sqrt{n\log n}$.
In Subsection~\ref{subsec:lower}, considering a well-chosen orientation of $G(n,p)$ with $p=c_0/\sqrt{n}$, we prove that $\vec{a}(n) \leq \frac{107}{8} \sqrt n \log n$ for $n$ sufficiently large.
Hence we have, for every $\eps>0$ and $n$ large enough,
\begin{equation}\label{eq:a}
\left(\frac{1}{\sqrt{2}} - \eps \right) \sqrt{n\log n} \leq \vec{a}(n) \leq \frac{107}{8} \sqrt n \log n.
\end{equation}

We believe that a similar analysis can be performed on the $n$-vertex triangle-free process $G_\triangle$ (see \cite{Boh09}), to prove that an orientation of $G_\triangle$ has acyclic number $\bigO{\sqrt{n\log n}}$. We thus conjecture: 

\begin{conj}\label{conj:a} $\vec{a}(n) = \Theta(\sqrt{n\log n})$.  
\end{conj}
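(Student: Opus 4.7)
The lower bound $\vec a(n) \geq \pth{\frac{1}{\sqrt 2} - o(1)}\sqrt{n\log n}$ is already contained in~\eqref{eq:a} via $\vec a(n) \geq Q(3,n)$, so only the matching upper bound $\vec a(n) = \bigO{\sqrt{n\log n}}$ needs to be established. Following the hint in the paper, the plan is to replace the authors' orientation of $G(n,c_0/\sqrt n)$ with an orientation of the graph $G_\triangle$ produced by the $n$-vertex triangle-free process, which is a factor $\sqrt{\log n}$ denser while remaining triangle-free. Concretely, run the triangle-free process to obtain $G_\triangle$ and then orient each edge uniformly at random and independently to produce an oriented triangle-free graph $D$. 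Setting $k = C\sqrt{n\log n}$ for a large absolute constant $C$, the goal is to show that with positive probability $\vec\alpha(D) < k$.

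\textbf{First moment.} For any fixed $S \subseteq V$ with $|S|=k$, let $m_S = |E(G_\triangle[S])|$. Conditionally on $G_\triangle$, the induced orientation on $S$ is acyclic if and only if it agrees with some topological order of $S$; since there are at most $k!$ linear orderings and each one determines all $m_S$ edge directions, we have $\pr{S \text{ acyclic in } D \mid G_\triangle} \leq k! \cdot 2^{-m_S}$. Using $\log\binom{n}{k} + \log k! = (1+o(1))k\log n = (1+o(1))C\sqrt n (\log n)^{3/2}$, the union bound over the $\binom{n}{k}$ choices of $S$ reduces the proof to the following uniform density statement: with probability bounded away from $0$, every $k$-subset $S$ of $V(G_\triangle)$ satisfies $m_S \geq c \cdot k^2 \sqrt{\log n/n}$ for some constant $c = c(C) > 0$. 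Indeed, such a bound gives $m_S = \Omega(C^2 \sqrt n (\log n)^{3/2})$, so $\binom{n}{k} k!\cdot 2^{-m_S}$ decays like $\exp\!\pth{(O(C) - \Omega(C^2))\sqrt n(\log n)^{3/2}}$, which tends to $0$ for $C$ large enough.

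\textbf{Main obstacle.} The crux is thus to verify the uniform density bound $m_S \geq c k^2 \sqrt{\log n/n}$ for \emph{every} $k$-subset $S$. Since $G_\triangle$ has $\Theta(n^{3/2}\sqrt{\log n})$ edges, this bound merely asks that the global average density be inherited by every large subset, i.e., it is a quasirandomness property. Such pseudorandomness should follow from the refined analyses of the triangle-free process (Bohman, Kim, Fiz Pontiveros--Griffiths--Morris, Bohman--Keevash), which track concentration of vertex degrees and codegrees throughout the process via the differential equation and martingale method. Extracting from these tools a density bound strong enough to beat the $\binom{n}{k}$ failure budget, simultaneously for every $k$-subset, is the delicate step: it will likely require either fine control over the distribution of ``missing'' edges within subsets of size up to $k$, or running the argument at a well-chosen intermediate step of the process rather than at its termination, where the relevant statistics are easier to bound uniformly.
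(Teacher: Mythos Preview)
The statement you are attempting to prove is labelled a \emph{Conjecture} in the paper, and the paper gives no proof of it; the authors only remark that they ``believe that a similar analysis can be performed on the $n$-vertex triangle-free process $G_\triangle$'' and then state the conjecture. So there is no proof in the paper to compare against.

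Your plan is exactly the heuristic the paper hints at, and the reduction you sketch is sound: the first-moment computation is correct (indeed it is the same argument as in the proof of Lemma~\ref{lem:dsparse-acyclic}), and it does reduce the upper bound $\vec a(n)=O(\sqrt{n\log n})$ to the uniform density statement that every $k$-subset of $G_\triangle$ with $k=C\sqrt{n\log n}$ spans $\Omega(k^2\sqrt{\log n/n})$ edges. But you have not proved that density statement; you only assert that it ``should follow'' from the known analyses of the triangle-free process, while in the same breath conceding that extracting it is ``delicate'' and may require running the process to an intermediate time or new control on missing edges. That is not a proof --- it is a restatement of what remains to be done. The existing differential-equation and martingale analyses of $G_\triangle$ track degrees, codegrees, and counts of open pairs with error probabilities that are at best $e^{-n^{c}}$ for small $c$; controlling the edge count inside \emph{every} subset of size $\Theta(\sqrt{n\log n})$, with failure probability below $\binom{n}{k}^{-1}=\exp\!\big(-\Theta(\sqrt{n}(\log n)^{3/2})\big)$, is not something one can read off those results. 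Until that step is actually carried out, the conjecture remains open, and your write-up should be presented as a proof outline, not a proof.
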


The above upper bound on  $\vec{a}(n)$ 
immediately yields $\vec{t}(n) \geq \frac{8}{107}\frac{\sqrt n}{\log n}$.
Moreover, $\vec{t}(n) \leq t(n)$ and so $\vec t(n) \leq \pth{2\sqrt 2 + o(1)}\sqrt{\frac{n}{\log n}}$. In Subsection~\ref{sec:upper_bound}, we improve this bound by a factor of $2$.  
 We thus have, for every $\eps>0$ and $n$ large enough,
\begin{equation}\label{eq:main}
\frac{8}{107} \frac{\sqrt n}{\log n} \leq \vec{t}(n) \leq  \pth{\sqrt 2 + o(1)} \sqrt{\frac{n}{\log n}}.
\end{equation}

Conjecture~\ref{conj:a} implies the following.

\begin{conj}
    $\vec t(n) = \Theta \sqrt{\frac{n}{\log n}}$. 
\end{conj}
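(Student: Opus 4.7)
The plan is to derive $\vec{t}(n) = \Theta(\sqrt{n/\log n})$ from Conjecture~\ref{conj:a} by combining it with an already-established upper bound. The upper bound $\vec{t}(n) \leq (\sqrt{2} + o(1))\sqrt{n/\log n}$ is the right-hand side of~\eqref{eq:main}, which is unconditional, so the only direction that actually needs the conjecture is the matching lower bound $\vec{t}(n) = \Omega(\sqrt{n/\log n})$.

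For that lower bound I would rely on the elementary inequality $\vec{\chi}(D) \geq n/\vec{\alpha}(D)$, valid for every digraph $D$ of order $n$: in any minimum dicolouring some colour class has size at least $n/\vec{\chi}(D)$ and induces an acyclic subdigraph. Assuming Conjecture~\ref{conj:a}, there exist a constant $C > 0$ and, for all $n$ large enough, an oriented triangle-free graph $D_n$ of order $n$ with $\vec{\alpha}(D_n) \leq C\sqrt{n\log n}$. Applying the inequality to $D_n$ yields
\[
\vec{t}(n) \geq \vec{\chi}(D_n) \geq \frac{n}{\vec{\alpha}(D_n)} \geq \frac{1}{C}\sqrt{\frac{n}{\log n}},
\]
which is the required $\Omega(\sqrt{n/\log n})$ bound. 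Together with~\eqref{eq:main}, this gives the stated conditional equality $\vec{t}(n) = \Theta(\sqrt{n/\log n})$.

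The implication itself is purely formal, and the ``pull out the largest acyclic set'' bound happens to be tight up to a constant factor in precisely the regime dictated by Conjecture~\ref{conj:a}. Hence the main obstacle is not in this derivation but in the conjecture it rests upon, namely producing a triangle-free oriented graph whose acyclic number matches the independence-number lower bound $Q(3,n) = \Omega(\sqrt{n\log n})$. As hinted in the discussion around~\eqref{eq:a}, this would presumably require orienting the triangle-free process $G_\triangle$ carefully and adapting the delicate analyses of~\cite{Boh09, FGM20, BK21} to control acyclic sets rather than independent sets.
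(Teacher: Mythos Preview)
Your proposal is correct and matches the paper's own treatment: the statement is a \emph{conjecture}, and the paper does not prove it outright but merely asserts that Conjecture~\ref{conj:a} implies it. You have spelled out exactly that implication --- the unconditional upper bound from~\eqref{eq:main} together with the pigeonhole inequality $\vec\chi(D)\ge n/\vec\alpha(D)$ applied to a witness for $\vec a(n)=O(\sqrt{n\log n})$ --- which is precisely the reasoning the paper leaves implicit.
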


\medskip

Determining $\vec t(n)$ is equivalent to determining $\vec m(k)$ the minimum order of a $k$-dichromatic oriented triangle-free graph, because
$\vec m(k) = \min \vec t^{-1}(k)$. In Section~\ref{sec:det},
 we present a deterministic way of building small oriented triangle-free graphs with dichromatic number at least $k$ from an undirected triangle-free graph with chromatic number at least $k$. We then use this construction to build a $3$-dichromatic oriented triangle-free graph on $25$ vertices. Then we prove that all oriented triangle-free graphs on at most $17$ vertices are $2$-dicolourable. Hence 
\begin{equation}
    18 \leq \vec m(3) \leq 25
\end{equation}
Using our construction, we also obtain that $\vec m(4) \leq 209$.

\medskip

In order to prove the upper bound on $\vec a(5)$ given in~\eqref{eq:a}, we first show in Lemma~\ref{lem:dsparse-acyclic}, that every graph  
$G$ of order $n$, admits an orientation such that each of its acyclic sets is $d$-sparse with $d\coloneqq 2\log_2 n + 1$ in $G$. A set of vertices $X\subseteq V(G)$ is \emph{$d$-sparse} in $G$ if the average degree of $G[X]$ is at most $d$.
 This lemma also yields interesting bounds 
 between the (list) chromatic number 
 of graph $G$ and $\dich(G)$, the maximum dichromatic number over all orientations of $G$. 
 In Theorems ~\ref{thm:ratio-dichromatic} and~\ref{thm:dichromatic-list}, we prove that, for every graph $G$ of order $n$, one has
\begin{align}
 \chi(G) &\le 2\dich(G)\pth{1+\floor{\log_2 n}} \label{eq:chi},\quad \mbox{and}  \\
 \chi_{\ell}(G) &\le 6\dich(G)\pth{1+\floor{\log_2 n}}\label{eq:chi_l}.
\end{align}

\section{Preliminary results}\label{sec:def_prel}


\subsection{Probabilistic tools}\label{subsec:proba_tools}

\begin{lemma}[c.f. \cite{KaMc10}]
    \label{lem:binomial}
    Let $X$ be a random variable distributed according to the binomial distribution $\mathcal{B}(n,p)$, for some integer $n$ and some $p\in (0,1)$. For every $x\in (0,1)$, we let $\Lambda^*(x) \coloneqq x \ln \frac{x}{p} + (1-x) \ln \frac{1-x}{1-p}$.
    Then, for every integer $k < n$, one has
    \[ \pr{X \le k} \le e^{-n\, \Lambda^*(k/n)}.\]
\end{lemma}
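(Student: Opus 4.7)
The plan is to apply the standard Chernoff--Cramér exponential Markov argument and optimize the free parameter. Concretely, for any $t > 0$, writing $X = \sum_{i=1}^n X_i$ where the $X_i$ are i.i.d.\ Bernoulli($p$), one has
\[ \pr{X \le k} \;=\; \pr{e^{-tX} \ge e^{-tk}} \;\le\; e^{tk}\,\esp{e^{-tX}} \;=\; e^{tk}\bigl(1 - p + p e^{-t}\bigr)^n. \]
This is the first and easiest step: Markov's inequality together with independence to factor the moment generating function.

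Next I would optimize over $t > 0$. Writing $x \coloneqq k/n$ and differentiating the logarithm of the right-hand side in $t$, the optimum satisfies $pe^{-t}/(1 - p + p e^{-t}) = x$, giving
\[ e^{-t^*} \;=\; \frac{x(1-p)}{p(1-x)}, \qquad 1 - p + p e^{-t^*} \;=\; \frac{1-p}{1-x}. \]
Note $t^* > 0$ requires $x < p$; in the complementary case $x \ge p$ the definition of $\Lambda^*$ gives $\Lambda^*(x) \le 0$ (one can check by convexity that $\Lambda^*(p) = 0$ and $\Lambda^*$ is minimized at $p$), so the bound $e^{-n\Lambda^*(x)} \ge 1$ is trivial against $\pr{X \le k} \le 1$, and there is nothing to prove. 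This is the one sanity check worth flagging; it is the only place one has to be mildly careful.

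Finally I would substitute $t = t^*$ into the Chernoff bound. Plugging in yields
\[ \pr{X \le k} \;\le\; \pth{\frac{p(1-x)}{x(1-p)}}^{nx} \pth{\frac{1-p}{1-x}}^{n}, \]
and taking logarithms and collecting terms regroups exactly into
\[ -nx\ln\frac{x}{p} - n(1-x)\ln\frac{1-x}{1-p} \;=\; -n\Lambda^*(x), \]
which is the desired inequality. The whole argument is a textbook computation; there is no real obstacle, and the lemma is cited to \cite{KaMc10} precisely because it is standard. The only subtleties are (a) remembering to treat the $x \ge p$ regime separately, and (b) verifying the algebra after substitution, which is a short exercise.
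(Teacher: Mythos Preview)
The paper gives no proof of this lemma; it is simply cited to \cite{KaMc10}. Your argument is the standard Chernoff--Cram\'er computation, and it is correct in the regime $x = k/n < p$, which is also the only regime in which the paper invokes the bound (in the proof of Theorem~\ref{thm:lower_bound} one has $x = d/(k-1) = p/c_1 < p$).

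There is, however, a slip in your treatment of the complementary case $x \ge p$. You assert that $\Lambda^*(x) \le 0$ there, but $\Lambda^*(x)$ is exactly the Kullback--Leibler divergence $D\bigl(\mathrm{Bern}(x)\,\|\,\mathrm{Bern}(p)\bigr)$, which is non-negative for every $x\in(0,1)$ and vanishes only at $x=p$. In fact the stated inequality $\pr{X\le k}\le e^{-n\Lambda^*(k/n)}$ is generally false when $k/n>p$: for instance with $n=2$, $p=0.1$, $k=1$ one has $\pr{X\le 1}=0.99$ while $e^{-2\Lambda^*(1/2)}\approx 0.36$. So the lemma as written tacitly needs $k\le np$, and your attempt to cover the case $x\ge p$ does not repair this. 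The substantive part of your proof, for $x<p$, stands.
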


\begin{lemma}[Chernoff's bound]
    \label{lem:chernoff}
    Let $X$ be a sum of i.i.d. 
    $(0,1)$-valued variables. Then, for every $\delta \in (0,1)$,
    \[ \pr{X < (1-\delta) \esp{X}} < \pth{\frac{e^{-\delta}}{(1-\delta)^{1-\delta}}}^{\esp{X}}\]
\end{lemma}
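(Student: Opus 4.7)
The plan is to apply the standard Chernoff/Cram\'er exponential moment method. Write $X = \sum_{i=1}^n X_i$ where the $X_i$ are i.i.d.\ Bernoulli variables with common parameter $p \coloneqq \esp{X_1}$, so $\mu \coloneqq \esp{X} = np$, and let $t > 0$ be a parameter to be optimized later. The first step is to convert the lower tail event into an upper tail for $e^{-tX}$ and apply Markov's inequality:
\[
\pr{X < (1-\delta)\mu} = \pr{e^{-tX} > e^{-t(1-\delta)\mu}} \le e^{t(1-\delta)\mu} \cdot \esp{e^{-tX}}.
\]

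Next, use independence to factor the moment generating function as $\esp{e^{-tX}} = \prod_{i=1}^n \esp{e^{-tX_i}} = (1 - p + pe^{-t})^n$, and then invoke the elementary inequality $1+x \le e^x$ applied with $x = -p(1-e^{-t})$ to obtain
\[
\esp{e^{-tX}} \le e^{-np(1-e^{-t})} = e^{-\mu(1 - e^{-t})}.
\]
Combining this with the previous bound yields $\pr{X < (1-\delta)\mu} \le \exp\bigl(\mu\bigl[t(1-\delta) - (1 - e^{-t})\bigr]\bigr)$ for every $t > 0$.

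The final step is to optimize the exponent by choosing $t = -\ln(1-\delta)$, which is positive since $\delta \in (0,1)$, so that $e^{-t} = 1-\delta$. Substituting, the exponent becomes $\mu\bigl[-(1-\delta)\ln(1-\delta) - \delta\bigr]$, which rearranges exactly to $\mu \ln\!\pth{\tfrac{e^{-\delta}}{(1-\delta)^{1-\delta}}}$, giving the claimed bound. The strictness of the final inequality follows because Markov's inequality is strict here (the event $X < (1-\delta)\mu$ is not of full measure unless $\mu = 0$, a trivial case).

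There is no real obstacle: the only subtlety is verifying that $t = -\ln(1-\delta)$ is indeed the minimizer of $t(1-\delta) - (1-e^{-t})$ on $(0,\infty)$, which is a one-variable calculus exercise (set the derivative $(1-\delta) - e^{-t}$ equal to zero). The rest is bookkeeping with the identity $-(1-\delta)\ln(1-\delta) - \delta = \ln\!\pth{e^{-\delta}/(1-\delta)^{1-\delta}}$.
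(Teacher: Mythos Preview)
Your argument is the standard Cram\'er--Chernoff exponential moment proof and is correct. The paper does not give its own proof of this lemma; it is listed among the preliminary probabilistic tools and used as a black box, so there is no proof in the paper to compare against.

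One small remark on the last sentence: your justification of the \emph{strict} inequality via Markov is not quite the right mechanism. Strictness in Markov's inequality $\pr{Y \ge a} \le \esp{Y}/a$ depends on whether $Y$ is almost surely supported on $\{0,a\}$, not on whether the tail event has full measure. The clean source of strictness here is the step $1 - p + pe^{-t} < \exp\bigl(-p(1-e^{-t})\bigr)$, which is strict whenever $p>0$ and $t>0$ because $1+x < e^{x}$ for all $x \ne 0$. The degenerate case $p=0$ (so $\mu=0$) gives $\pr{X<0}=0<1$ directly. This does not affect the validity of your conclusion.
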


\begin{lemma}[Lov\'asz Local Lemma]
\label{lem:LLL}
Let $\{E_i\}_{i \in [n]}$ be a set of random (bad) events, with dependence graph $\Gamma$ (i.e. each event $E_i$ is mutually independent from every random event that is not in its neighbourhood in $\Gamma$).
If there exist $y_1, \ldots, y_n > 0$ such that $\pr{E_i} < 1/y_i$, and moreover 
\[ \ln y_i > \sum_{E_j \in N_\Gamma(E_i)} y_j \pr{E_j}, \]
for every $i \in [n]$, then $\pr{\bigwedge_{i\in [n]} \overline{E_i}} >0$.
\end{lemma}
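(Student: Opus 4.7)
The plan is to adapt the classical induction proof of the asymmetric Lov\'asz Local Lemma to this formulation. First observe that the condition $\ln y_i > \sum_{E_j \in N_\Gamma(E_i)} y_j \pr{E_j} \geq 0$ forces $y_i > 1$, so in particular $1 - 1/y_i > 0$ for every $i$. The core inductive claim, proved by induction on $|S|$, is that for every $S \subseteq [n]$ and every $i \notin S$,
\[ \pr{E_i \mid \bigwedge_{j \in S} \overline{E_j}} \leq \frac{1}{y_i}. \]
The base case $S = \emptyset$ is immediate from $\pr{E_i} < 1/y_i$.

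For the inductive step, I would decompose $S = S_1 \sqcup S_2$ with $S_1 = S \cap N_\Gamma(E_i)$, so that $E_i$ is mutually independent from the events indexed by $S_2$. Expressing the conditional probability as a ratio and exploiting this independence in the numerator yields the upper bound $\pr{E_i} / \pr{\bigwedge_{j \in S_1} \overline{E_j} \mid \bigwedge_{j \in S_2} \overline{E_j}}$. The denominator is then telescoped into a product of $|S_1|$ factors of the form $1 - \pr{E_{j_k} \mid \cdots}$, each conditioning on a set strictly smaller than $S$, and the inductive hypothesis is applied factor by factor to obtain the lower bound $\prod_{j \in S_1}(1 - 1/y_j)$. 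Closing the induction then amounts to showing $\prod_{j \in S_1}(1 - 1/y_j) \geq \pr{E_i}\, y_i$, which, after taking logarithms and using standard inequalities relating $-\ln(1-x)$ and $x$, reduces to the stated hypothesis $\ln y_i > \sum_{j \in N_\Gamma(E_i)} y_j \pr{E_j}$.

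Once the claim is established, the conclusion follows from the chain-rule expansion
\[ \pr{\bigwedge_{i \in [n]} \overline{E_i}} = \prod_{i=1}^n \pr{\overline{E_i} \mid \bigwedge_{j<i} \overline{E_j}} \geq \prod_{i=1}^n \pth{1 - \frac{1}{y_i}} > 0. \]
The main obstacle I foresee is the mismatch between the multiplicative form $\prod (1 - 1/y_j)$ produced naturally by the induction and the additive logarithmic form $\sum y_j \pr{E_j} < \ln y_i$ appearing in the hypothesis. Bridging these requires careful joint use of both parts of the assumption --- the pointwise bound $\pr{E_j} < 1/y_j$ and the logarithmic sum bound --- and it is where any slack between this formulation and the classical Erd\H{o}s--Lov\'asz statement has to be absorbed.
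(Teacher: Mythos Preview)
The paper states this lemma without proof as a standard probabilistic tool, so there is no argument in the paper to compare against. Your outline follows the classical induction for the asymmetric Local Lemma, and everything up to and including the telescoped lower bound $\prod_{j\in S_1}(1-1/y_j)$ on the denominator is correct.

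The genuine gap is precisely the step you yourself flag as ``the main obstacle''. You need
\[
\prod_{j\in S_1}\Bigl(1-\frac{1}{y_j}\Bigr)\ \ge\ y_i\,\pr{E_i},
\]
and you propose to extract this from $\ln y_i>\sum_{j\in N_\Gamma(E_i)} y_j\pr{E_j}$ together with $\pr{E_j}<1/y_j$ via ``standard inequalities relating $-\ln(1-x)$ and $x$''. But the only relevant inequality, $-\ln(1-x)\ge x$, points the wrong way, and the two hypotheses give no lower bound on the $y_j$: one may take $y_j$ close to $1$ (forcing $\prod_j(1-1/y_j)$ to be tiny) while keeping every $y_j\pr{E_j}$ bounded. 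This gap is not repairable, because the lemma \emph{as stated} is false. A counterexample: on $\Omega=\{0,1\}^{100}$ with the uniform measure, set $B_j=\{\omega_j=1\}$ and $A=\{\mathbf 0\}$. The dependency graph is the star centred at $A$, and $\overline A\cap\bigcap_j\overline{B_j}=\varnothing$. Taking $y_A=e^{66}$ and $y_{B_j}=1.3$, one checks all four hypotheses directly ($2^{-100}<e^{-66}$; $\tfrac12<\tfrac1{1.3}$; $66>100\cdot 1.3\cdot\tfrac12=65$; $\ln 1.3\approx 0.262>e^{66-100\ln 2}\approx 0.037$), yet the conclusion fails. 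What would make your induction close is the classical condition $y_i\pr{E_i}\le\prod_{j\in N_\Gamma(E_i)}(1-1/y_j)$; the logarithmic condition in the paper only implies it under an extra assumption such as $y_j\pr{E_j}\le\tfrac12$ for all $j$ --- which, to be fair, is satisfied in the paper's sole application of the lemma.
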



\subsection{A useful orientation}\label{subsec:useful_or}

The following lemma is the main tool used to prove the lower bound in~\eqref{eq:main}. 

\begin{lemma}
\label{lem:dsparse-acyclic}
    Let $G$ be a graph of order $n$, and fix $d\coloneqq 2\log_2 n + 1$.
    Then there exists an orientation $\aG$ of $G$ such that every acyclic set of vertices in $\aG$ is $d$-sparse in $G$.
\end{lemma}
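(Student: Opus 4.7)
The plan is to apply the probabilistic method to a uniformly random orientation $\aG$ of $G$, where each edge is independently oriented in each of its two directions with probability $1/2$. I will show that with positive probability, no subset $X \subseteq V(G)$ with $e_G(X) > d|X|/2$ induces an acyclic subdigraph of $\aG$ — equivalently, every acyclic set of $\aG$ is $d$-sparse in $G$.

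First, I fix $X \subseteq V(G)$ with $|X| = k$, and note that $\aG[X]$ is determined by the orientations of the $e_G(X)$ edges of $G[X]$, which are independent and uniform. Hence
\[ \pr{X \text{ is acyclic in } \aG} = \frac{a(G[X])}{2^{e_G(X)}}, \]
where $a(H)$ denotes the number of acyclic orientations of an undirected graph $H$. I would then invoke the standard bound $a(H) \leq k!$, which follows from the fact that every acyclic orientation is induced by at least one topological ordering of $V(H)$.

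Next, I apply the union bound over non-$d$-sparse $X$, i.e.\ those with $e_G(X) > dk/2$. Since $e_G(X) \leq \binom{k}{2}$, such subsets can exist only for $k \geq d+2$. Combining $\binom{n}{k} k! \leq n^k$ with $2^{-e_G(X)} \leq 2^{-dk/2}$ gives
\[ \pr{\exists \text{ non-}d\text{-sparse acyclic } X} \leq \sum_{k \geq d+2} \binom{n}{k} k! \cdot 2^{-dk/2} \leq \sum_{k \geq d+2} \pth{\frac{n}{2^{d/2}}}^{k}. \]
Substituting $d = 2\log_2 n + 1$ yields $n/2^{d/2} = 1/\sqrt{2}$, and the first term $(1/\sqrt{2})^{d+2}$ is already $O(1/n)$, so the whole geometric sum is $O(1/n) < 1$ for $n$ sufficiently large (the lemma being trivial for $n \leq 2$). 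A suitable orientation therefore exists.

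The main obstacle is calibrating $d$ so that the union bound sum is strictly below $1$. The choice $d = 2\log_2 n + 1$ is dictated by the constraint $n/2^{d/2} < 1$ (needed for the geometric series to decay), and the starting index $k \geq d+2$, forced by the pigeonhole $e_G(X) \leq \binom{k}{2}$, is what pushes the first term of the sum down to $O(1/n)$. Any meaningful weakening of $d$ would invalidate this basic calculation, so the argument appears nearly tight for a plain random orientation; improvements would presumably demand the Lovász Local Lemma or a sharper handle on $a(G[X])$.
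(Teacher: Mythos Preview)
Your proof is correct and follows essentially the same approach as the paper: orient each edge uniformly at random, bound the number of acyclic orientations of $G[X]$ by $|X|!$, and apply a union bound over all non-$d$-sparse sets. Your observation that a non-$d$-sparse set must have $k>d+1$ (so the geometric series in $(1/\sqrt{2})^k$ begins at a term of order $1/n$) is exactly what is needed to push the union bound below~$1$; the paper's write-up is slightly looser on this point.
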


\begin{proof}
Let $\aG$ be a uniformly random orientation of $G$. Let $X$ be any subset of vertices of $G$ inducing a subgraph of average degree at least $d \coloneqq 2\log_2 n +1$. We denote $s\coloneqq |X|$; there are at least $ds/2$ edges in $G[X]$.
For each acyclic orientation of $G[X]$, there is an ordering $(x_1, \dots , x_s)$ of $X$ that $x_ix_j$ is an arc if and only if $x_ix_j \in E(G)$ and $i<j$.
Hence the number of acylic orientations of $G[X]$ is at most the number of orderings of $X$, so at most $s!$. Since there are exactly $2^{|E(G[X])|} \ge 2^{ds/2}$ orientations of $G[X]$, we have
\[ \pr{\mbox{$X$ is acyclic in $\aG$}} \le \frac{s!}{2^{ds/2}}.\]
On the other hand, there are $\binom{n}{s}$ possible choices for a subset $X$ of $s$ vertices, so the probability that there exists an acyclic induced subdigraph of $\aG$ of density more than $d$ is at most 
\[ \sum_{s\ge 1} \binom{n}{s} \frac{s!}{2^{ds/2}} \le \sum_{s\ge 1} \left(\frac{n}{2^{d/2}}\right)^s \le \sum_{s\ge 1} \left(\frac{1}{2}\right)^s < 1.\]

We conclude that there exists an orientation $\aG$ of $G$ such that every acyclic set of $\aG$ is $d$-sparse, as desired.
\end{proof}

Lemma~\ref{lem:dsparse-acyclic} has interesting consequences when we seek for a lower bound on the dichromatic number of a graph $G$ in terms of its chromatic parameters.

\begin{thm}
\label{thm:ratio-dichromatic}
For every graph $G$ of order $n$, one has
\[ \chi(G) \le 2\dich(G)\pth{1+\floor{\log_2 n}}.\]
\end{thm}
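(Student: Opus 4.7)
The plan is to combine Lemma~\ref{lem:dsparse-acyclic} with the classical fact that a graph in which every induced subgraph has small average degree is low-degenerate, and hence properly colourable with few colours. We first invoke the lemma to obtain an orientation $\aG$ of $G$ such that every acyclic subset of $V(\aG)$ is $d$-sparse in $G$, where $d \coloneqq 2\log_2 n + 1$. By the very definition of $\dich(G)$ as the maximum dichromatic number over all orientations of $G$, we have $\dich(\aG) \le \dich(G)$, so we may fix a dicolouring of $\aG$ into $k \le \dich(G)$ colour classes $V_1, \ldots, V_k$, each of which induces an acyclic subdigraph.

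The key observation is that any subset of an acyclic set is again acyclic, and therefore $d$-sparse in $G$. In particular, every induced subgraph of $G[V_i]$ has average degree at most $d$, so it contains a vertex of degree at most $\floor{d}$. The standard greedy argument --- iteratively remove a minimum-degree vertex, then colour in reverse order --- then shows that $G[V_i]$ is $(\floor{d}+1)$-colourable. Using disjoint colour palettes for $V_1,\dots,V_k$ yields a proper colouring of $G$ with at most $k\,(\floor{d}+1) \le \dich(G)\,(\floor{d}+1)$ colours.

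It only remains to verify the arithmetic bound $\floor{d}+1 \le 2(1+\floor{\log_2 n})$, which is routine via a short case distinction on $n$ (using the fact that in Lemma~\ref{lem:dsparse-acyclic} the average degree of an acyclic set is actually strictly smaller than $d$, which absorbs any rounding slack when $d$ happens to be an integer). This is the only mild subtlety; conceptually, Lemma~\ref{lem:dsparse-acyclic} has already done all the real work, so we do not anticipate any substantial obstacle.
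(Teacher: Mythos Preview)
Your proof is essentially the paper's own: invoke Lemma~\ref{lem:dsparse-acyclic}, observe that each acyclic colour class is $\floor d$-degenerate (the paper asserts this in one line; you correctly spell out the reason via ``every subset of an acyclic set is acyclic, hence $d$-sparse''), and colour the classes with disjoint palettes of size $\floor d + 1$.

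One small remark: your instinct to flag the final arithmetic was good, but the fix you propose does not actually work. The inequality $\floor d + 1 \le 2(1+\floor{\log_2 n})$ reduces to $\floor{2\log_2 n} \le 2\floor{\log_2 n}$, which fails precisely when the fractional part of $\log_2 n$ is at least $\tfrac12$ (e.g.\ $n=3,6,7,12,\dots$); in those cases $d=2\log_2 n+1$ is \emph{not} an integer, so sharpening Lemma~\ref{lem:dsparse-acyclic} to a strict inequality is irrelevant. The paper itself writes this step as an equality without justification, so the off-by-one rounding issue is shared; it is cosmetic rather than conceptual.
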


\begin{proof}
Let $\aG$ be the orientation of $G$ given by Lemma~\ref{lem:dsparse-acyclic}.
Then every acyclic subdigraph of $\aG$ is $\floor{d}$-degenerate, and hence has a proper $(\floor{d}+1)$-colouring, where $d=2\log_2n+1$. Let $c$ be a dicolouring of $\aG$ with $\dich(\aG)\le \dich(G)$ colours. We can find a proper colouring of $G$ by colouring independently each colour class of $c$ using at most $\floor{d}+1$ (new) colours. Hence
\[ \chi(G) \le \dich(G)(\floor{d}+1) = 2\dich(G)\pth{1+\floor{\log_2 n}}.\]
\end{proof}

For a given graph $G$, we let $\chi_\ell(G)$ be the \emph{list-chromatic number} of $G$ (sometimes called the \emph{choosability} of $G$). We recall that this is the minimum integer $k$ such that, for every $k$-list-assignment $L\colon V(G) \to \binom{\mathbb{N}}{k}$, there exists a proper $L$-colouring of $G$, that is a proper colouring $c\colon V(G) \to \mathbb{N}$ that satisfies $c(v) \in L(v)$ for every vertex $v\in V(G)$.
A greedy algorithm shows that, for every integer $d\ge 0$, every $d$-degenerate graph $G$ is $(d+1)$-list-colourable, i.e. $\chi_\ell(G) \le d+1$.

\begin{thm}
    \label{thm:dichromatic-list}
    For every graph $G$ of order $n$, one has
\[ \chi_{\ell}(G) \le 6\dich(G)\pth{1+\floor{\log_2 n}}.\]
\end{thm}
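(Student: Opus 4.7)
The plan is to mimic the proof of Theorem~\ref{thm:ratio-dichromatic}, enriching the deterministic colouring step with a random partition of the palette in order to lift dicolourings to list-colourings. Let $\aG$ be the orientation of $G$ furnished by Lemma~\ref{lem:dsparse-acyclic}, set $d\coloneqq 2\log_2 n+1$, and fix an optimal dicolouring $\phi$ of $\aG$ into classes $V_1,\dots,V_k$ with $k\le\dich(G)$. Since every induced subdigraph of an acyclic digraph is still acyclic, every non-empty subset of $V_i$ is $d$-sparse in $G$ and thus contains a vertex of degree at most $\floor d$; in particular $G[V_i]$ is $\floor d$-degenerate, hence $(\floor d+1)$-list-colourable.

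Fix any list assignment $L$ with $|L(v)|\ge M\coloneqq 6\dich(G)(1+\floor{\log_2 n})$ for every $v$. Assign each colour $c\in\bigcup_v L(v)$ independently and uniformly at random to one of the $k$ classes, and let $L_i(v)\coloneqq\{c\in L(v):c\text{ is assigned to class }i\}$. The key point is that colours allocated to distinct classes are globally disjoint, so if every vertex $v\in V_i$ is eventually coloured using some colour of $L_i(v)$, the resulting $L$-colouring of $G$ is automatically proper.

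It therefore suffices to show that with positive probability $|L_{\phi(v)}(v)|\ge \floor d+1$ for every vertex $v$. This random variable is distributed as $\mathcal B(|L(v)|,1/k)$, with expectation at least $M/k\ge 6(1+\floor{\log_2 n})$; this is approximately $3$ times the threshold $\floor d+1\le 2(1+\log_2 n)$. Applying Chernoff's bound (Lemma~\ref{lem:chernoff}) with $\delta$ close to $2/3$ yields a tail probability of the form $(3e^{-2})^{\floor d+1}\le n^{-c}$ for some $c>1$, using $\floor d+1>2\log_2 n$; a union bound over the $n$ vertices then produces the desired favourable event. Conditioning on it, each class $V_i$ can be list-coloured from $L_i$ by its $\floor d$-degeneracy, and the pieces assemble into a proper $L$-colouring of $G$. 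The main obstacle is the careful calibration of the Chernoff computation, which dictates the factor $6$ in the statement—arising as $2$ (the deterministic factor coming from Theorem~\ref{thm:ratio-dichromatic}) times $3$ (the slack needed for concentration when splitting the palette).
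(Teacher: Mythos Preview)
Your proposal is correct and follows essentially the same route as the paper: take the orientation from Lemma~\ref{lem:dsparse-acyclic} so that each dicolour class is $\floor{d}$-degenerate, randomly assign each colour of the palette to one of the $\dich(\aG)$ classes, apply Chernoff with $\delta=2/3$ to show every vertex retains at least $\floor{d}+1$ colours from its own class, and finish with a union bound. The paper carries out exactly this argument, with the only cosmetic difference that it fixes the list size to $3q(\floor{d}+1)$ rather than the full $6\dich(G)(1+\floor{\log_2 n})$; your observation that the factor $6$ decomposes as $2\times 3$ matches the paper's calibration.
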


\begin{proof}
    As in the proof of Theorem~\ref{thm:ratio-dichromatic}, there exists an orientation $\aG$ of $G$ such that each acyclic subdigraph of $\aG$ is $\floor{d}$-degenerate, and hence $(\floor{d}+1)$-list-colourable, where $d=2\log_2n+1$. We write $k_0\coloneqq \floor{d}+1 \ge d$.
    Let $q=\dich(\aG)\le \dich(G)$, and let $\phi$ be a $q$-dicolouring of $\aG$.
    
    Let $k= 3qk_0 \le 6\dich(G)(1+\floor{\log_2 n})$, and let $L\colon V(G) \to \binom{\mathbb{N}}{k}$ be any $k$-list-assignment of $G$. We let $X\coloneqq \bigcup_{v\in V(G)} L(v)$ be the set of colours covered by $L$.  
    For every colour $x\in X$, we assign a label $\sigma(x)$ chosen uniformly at random from $[q]$.
    For every vertex $v\in V(G)$, we let $L_\sigma(v)\coloneqq \sset{x\in L(v)}{\sigma(x)=\phi(v)}$ be the list of colours in $L(v)$ that have been assigned $\phi(v)$ as their label. Then the size of $L_\sigma(v)$ is the sum of $k$ independent Bernoulli variables of parameter $1/q$. By Lemma~\ref{lem:chernoff}, we have 
    \begin{align*}
        \pr{|L_\sigma(v)|< k_0} &< \pth{\frac{e^{-2/3}}{(1/3)^{1/3}}}^{k/q} < e^{-0.9 k_0} \le e^{-0.9 d} < 1/n.
    \end{align*}
    We conclude with a union-bound that with non-zero probability we have $|L_\sigma(v)|\ge k_0$ for every vertex $v\in V(G)$. Each colour class of $\phi$ is therefore $L_\sigma$-colourable, and since by construction the colours in $L_\sigma$ are disjoint between different colour classes of $\phi$, we conclude that $G$ is $L_\sigma$-colourable. We finish the proof by noting that any proper $L_\sigma$-colouring is also in particular a proper $L$-colouring.
\end{proof}


\section{Bounds on $\vec{a}(n)$ and $\vec{t}(n)$}


In this section, we establish the bounds in~\eqref{eq:a} and~\eqref{eq:main}. 




\subsection{Upper bound on $\vec{a}(n)$}\label{subsec:lower}

\begin{thm}\label{thm:lower_bound}
    If $n$ is sufficiently large, then there exists an oriented triangle-free graph $G$ of order $n$ such that $\vec \alpha(G) \leq \frac{107}{8} \sqrt n \log n$. So 
    $\vec a(n) \leq \frac{107}{8} \sqrt n \log n$.
\end{thm}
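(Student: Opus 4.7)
The plan is an alteration argument applied to $G(n,p)$. Let $p=c_0/\sqrt n$ for a constant $c_0\in(0,1)$ to be tuned, and let $G\sim G(n,p)$. Form $G'$ by deleting, for each triangle of $G$, one of its edges (say the lexicographically first); the resulting graph $G'$ has order $n$ and is triangle-free by construction. By Lemma~\ref{lem:dsparse-acyclic}, $G'$ admits an orientation $\vec{G}'$ in which every acyclic set is $d$-sparse in $G'$, where $d=2\log_2 n+1$. It therefore suffices to show that, with positive probability over the choice of $G$, no subset of size $s:=\lceil\frac{107}{8}\sqrt n\log n\rceil$ is $d$-sparse in $G'$, for this yields $\vec\alpha(\vec{G}')\leq s-1$ and hence the claim.

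Fix a set $X$ of $s$ vertices, and write $E_X:=|E(G[X])|$ and $Z_X$ for the number of triangles of $G$ sharing at least two vertices with $X$; these are exactly the triangles whose deletion can remove an edge of $G[X]$, and each contributes at most one such removal, so that $|E(G'[X])|\geq E_X-Z_X$. Linearity of expectation gives
\[ \esp{E_X}=\binom{s}{2}p\sim \tfrac{c_0 C^2}{2}\sqrt n\log^2 n\qquad\text{and}\qquad \esp{Z_X}\leq \binom{s}{2}np^3\sim \tfrac{c_0^3 C^2}{2}\sqrt n\log^2 n,\]
with $C=107/8$. Thus $\esp{E_X-Z_X}\gtrsim \frac{c_0(1-c_0^2)C^2}{2}\sqrt n\log^2 n$, to be compared with the $d$-sparsity threshold $\frac{ds}{2}\sim\frac{C}{\log 2}\sqrt n\log^2 n$. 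Choosing $c_0$ close to $1/\sqrt 3$, the maximiser of $c_0(1-c_0^2)$, leaves a sizeable multiplicative gap.

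For the concentration part, split
\[ \pr{E_X-Z_X\leq \tfrac{ds}{2}}\leq \pr{E_X\leq \lambda_E}+\pr{Z_X\geq \lambda_Z}\]
for suitable thresholds with $\lambda_E-\lambda_Z=ds/2$: the first probability is controlled by Lemma~\ref{lem:binomial} or~\ref{lem:chernoff} since $E_X$ is binomial, and the second by a Chernoff-type tail bound for the triangle-indicator sum $Z_X$. Both tails should decay as $\exp(-\Omega(\sqrt n\log^2 n))$, which must dominate the union-bound cost $\log\binom{n}{s}\leq \bigl(\frac{C}{2}+o(1)\bigr)\sqrt n\log^2 n$. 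The main obstacle is quantitative: the $c_0$ that maximises the mean gap differs from the one that maximises the Chernoff exponents once the correlations between $E_X$ and $Z_X$ are taken into account, so the proof requires a delicate joint optimisation of $(c_0,\lambda_E,\lambda_Z)$; the specific value $C=107/8$ should emerge as the smallest convenient rational for which this optimisation is feasible. Taking a realisation of $G$ for which the bad event fails then produces an oriented triangle-free graph of order $n$ with acyclic number less than $s$, yielding $\vec a(n)\leq \frac{107}{8}\sqrt n\log n$.
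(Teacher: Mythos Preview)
The paper's proof takes a quite different route: it applies the Lov\'asz Local Lemma (Lemma~\ref{lem:LLL}) directly to $G(n,p)$, with two families of bad events --- ``$T$ spans a triangle'' for each triple $T$, and ``$X$ is $d$-sparse'' for each $k$-set $X$ --- and shows that with positive probability none of them occur. No alteration is performed; triangle-freeness comes for free from the LLL, exactly as in Spencer's classical argument for $R(3,t)$. The constants $c_0=0.513$ and $c_1=3.43$ are obtained by optimising the LLL conditions, and $\tfrac{107}{8}\approx \tfrac{2c_1}{c_0}$ is precisely what that optimisation yields.

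Your alteration-plus-union-bound strategy is a legitimate alternative in spirit, but as written it is a sketch rather than a proof, and the gap is genuine. The critical step is the upper tail of $Z_X$. This is a sum of \emph{dependent} triangle indicators, so neither Lemma~\ref{lem:binomial} nor Lemma~\ref{lem:chernoff} applies; ``a Chernoff-type tail bound for the triangle-indicator sum'' is exactly the thing that does not come for free. Upper-tail large deviations for subgraph counts are notoriously delicate --- naive edge- or vertex-exposure martingales fail here because a single vertex outside $X$ with many neighbours in $X$ can shift $Z_X$ by a large amount --- and you would need something like Kim--Vu polynomial concentration or a bespoke decoupling to obtain decay of order $\exp\bigl(-\Omega(\sqrt n\log^2 n)\bigr)$. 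You do not supply this, and your own phrasing (``should decay'', ``the main obstacle is'', ``should emerge'') concedes as much.

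Even granting such a bound, there is no reason the specific constant $\tfrac{107}{8}$ would fall out of your optimisation: that number is an artefact of the LLL computation, not of the alteration method, and in analogous Ramsey-type settings alteration typically yields strictly weaker constants than LLL. So at best your argument would establish the theorem with \emph{some} explicit constant in place of $\tfrac{107}{8}$ --- and even that only after the concentration step for $Z_X$ is genuinely carried out.
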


\begin{proof}
    Let $c_0>0$ be a fixed constant, whose specific value will be determined later in the proof.
    Let $G$ be a graph drawn from $G(n,p)$, with $p=c_0/\sqrt{n}$.
    
    By Lemma~\ref{lem:dsparse-acyclic}, there exists an orientation $\aG$ of $G$ such that every set that induces an acyclic subdigraph of $\aG$ is $d$-sparse, for $d\coloneqq 2\log_2 n + 1$.
    To finish the proof, we show that, with non-zero probability, $G$ has no triangle and there is no $d$-sparse set of size 
    $k\coloneqq c_1\, \frac{d}{p} + 1$ in $G$, for suitable values of $c_0,c_1$ (given at the end of the proof). 

    Let $X\subseteq V(G)$ be a subset of $k$ vertices of $G$. The number of edges in $G[X]$ follows the binomial distribution $\mathcal{B}(\binom{k}{2},p)$, and we note that $X$ is $d$-sparse if and only if $G[X]$ has at most $dk/2$ edges. 
    For $x\in (0,1)$, we recall that $\Lambda^*(x) = x \ln \frac{x}{p} + (1-x) \ln \frac{1-x}{1-p}$. By Lemma~\ref{lem:binomial}, we have
    \begin{align*}
        \pr{\mbox{$X$ is $d$-sparse}} &\le e^{-\binom{k}{2} \Lambda^*\pth{\frac{d}{k-1}}} \\
        &\le e^{-\frac{(k-1)^2}{2}\pth{\frac{p}{c_1}\ln \frac{1}{c_1} + \pth{1-\frac{p}{c_1}}\pth{\ln (1-p/c_1) - \ln (1-p)}}} \\
        &\le e^{-\frac{c_1^2d^2}{2 p^2}\pth{\frac{p}{c_1}\ln \frac{1}{c_1} + \pth{1-\frac{p}{c_1}}\pth{-\frac{p}{c_1}+p+O(p^2)}}} \\
        &\le e^{-\frac{c_1^2d^2}{2p^2}\pth{\frac{c_1-1-\ln c_1}{c_1}\,p + O(p^2)}} = e^{-c_1(c_1-1-\ln c_1)\frac{d^2}{2p} + O(d^2)}.
    \end{align*}

    For every set $T \in \binom{V(G)}{3}$, we let $A_T$ be the random event that $G[T]$ is a triangle. We have $\pr{A_T}=p^3$.
    For every set $X \in \binom{V(G)}{k}$, we let $B_X$ be the random event that $X$ is $d$-sparse. 
    By the above, we have $\pr{B_X} \le e^{-c_1(c_1-1-\ln c_1)d^2/2p + O(d^2)}$. 
    Let $\mathcal A = \{A_T \mid T \in \binom{V(G)}{3}\}$, $\mathcal B = \{B_X \mid X \in \binom{V(G)}{k}\}$ and $\E \coloneqq \mathcal A \cup \mathcal B$. We wish to apply Lemma~\ref{lem:LLL} in order to show that, with non-zero probability, no random event in $\E$ occurs. 
    The rest of the proof is similar to that in \cite{Spe77}.
    Let $\Gamma$ be the dependence graph of $\E$. Two events in $\E$ are adjacent in $\Gamma$ if and only if they concern sets that intersect on at least $2$ vertices (since otherwise they depend on disjoint sets of edges). For every $\mathcal X,\mathcal Y \in \{\mathcal A,\mathcal B\}$, we let $N_{\mathcal X \mathcal Y}$ be the maximum number of nodes of $\mathcal Y$ adjacent to a given node of $\mathcal X$ in $\Gamma$
    . We have 
    \begin{align*}
        N_{\mathcal A\mathcal B}, N_{\mathcal B\mathcal B} &\le |\mathcal B| = \binom{n}{k} < \pth{\frac{ne}{k}}^k;\\
        N_{\mathcal A\mathcal A} &= 3(n-3)<3n\; ;\\
        N_{\mathcal B\mathcal A} &= \binom{k}{2}(n-k)+\binom{k}{3} < \frac{(k-1)^2 n}{2} = \frac{1}{2}\pth{\frac{c_1dn}{c_0}}^2 \quad \mbox{for $n$ large enough.}        
    \end{align*}
    
    For Lemma~\ref{lem:LLL} to apply, there remains to find $y,z>0$ such that, for every $T\in \binom{V(G)}{3}$ and $X \in \binom{V(G)}{k}$,
    \begin{equation}
    \label{eq:LLL}
    \begin{split}
        \pr{A_T} &< 1/y, \quad \pr{B_X} < 1/z \; ;\\
        \ln y &> y \pr{A_T}N_{\mathcal A\mathcal A} + z \pr{B_X} N_{\mathcal A\mathcal B} \; ;\\
        \ln z &> y \pr{A_T}N_{\mathcal B\mathcal A} + z\pr{B_X} N_{\mathcal B\mathcal B}.
        \end{split}
    \end{equation}

\noindent
    Let $y=1+\eps$ for some $\eps>0$ small enough, and let $z = e^{c_2 d^2/2p}$ for some $0<c_2 < c_1(c_1-1-\ln c_1)$. With these values, we have 
    \begin{align*}
        z\pr{B_S} \binom{k}{n} &< \exp\pth{\frac{d^2}{2p}(c_2-c_1(c_1-1-\ln c_1)) + k \ln \frac{ne}{k} + O(d^2)} \hspace{-1000pt} \\
        &< \exp\pth{\frac{d^2}{2p}(c_2-c_1(c_1-1-\ln c_1)) + \frac{d^2}{2p}(1+o(1))} \hspace{-1000pt} \\
        &\underset{n\to \infty}{\to} 0 & \mbox{if $c_1(c_1-1-\ln c_1)>1+c_2$;}\\
        y\pr{A_T}N_{\mathcal B \mathcal A}&< (1+\eps) \frac{p^3}{2}\pth{\frac{c_1dn}{c_0}}^2 = (1+\eps) \frac{c_0 c_1^2}{2} d^2 \sqrt{n} \hspace{-1000pt} \\
        &< (1+\eps)(c_0c_1)^2 \frac{d^2}{2p} < \ln z - \eps & \mbox{if $c_2 > (1+\eps)(c_0c_1)^2$ and $n$ is large enough.}
    \end{align*}
    It is straightforward that \eqref{eq:LLL} holds whenever the two conditions on $c_0,c_1,c_2$ above hold, and $n$ is large enough. In order to minimise the value of $k$, we may fix $c_0=0.513$, $c_1=3.43$, and $c_2=3.1$.
    \end{proof}





\subsection{Upper bound on $\vec t(n)$} \label{sec:upper_bound}

The goal of this section is to prove the upper bound of~\eqref{eq:main}, see Theorem~\ref{thm:dich-ramsey}.  

\medskip 

Given a digraph $D$, and a total order $\prec$ on $V(D)$, we denote $D[\prec]$ the subdigraph induced by the arcs $(v,u) \in A(D)$ such that $u\prec v$. We may disregard the orientation of the edges in $D[\prec]$, in which case this is usually called the \emph{backedge graph} associated with $D$ and $\prec$. It is straightforward that every independent set $I$ of $D[\prec]$ induces an acyclic digraph $D[I]$.
A well-known consequence is that $\dich(D) \le \chi(D[\prec])$.
To the best of our knowledge, the possible tightness of that bound has never been discussed in the literature. We prove that there always exists $\prec$ such that $\dich(D)=\chi(D[\prec])$.

\begin{thm}
\label{thm:order-colouring}
    For every digraph $D$, one has
    \[ \dich(D) = \min \, \Big \{ \chi(D[\prec]) : \mbox{$\prec$ is a total order on $V(D)$} \Big\}.\]
\end{thm}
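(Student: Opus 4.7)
The inequality $\dich(D)\le \chi(D[\prec])$ for every total order $\prec$ is noted just before the statement, and can be proven by observing that if $c$ is a proper colouring of $D[\prec]$ and some colour class $X$ contained a directed cycle of $D$, then the $\prec$-maximum vertex $v$ of the cycle would emit an arc $(v,u)\in A(D)$ with $u\prec v$, giving an edge of $D[\prec]$ with both endpoints in $X$. Taking the minimum over $\prec$ therefore yields $\dich(D)\le \min_\prec \chi(D[\prec])$. The plan is to establish the reverse inequality by exhibiting, from any optimal dicolouring of $D$, a specific total order $\prec$ for which $\chi(D[\prec])\le \dich(D)$.

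Let $k\coloneqq \dich(D)$ and fix a $k$-dicolouring $\phi\colon V(D)\to [k]$ with colour classes $C_1,\dots,C_k$. By definition of a dicolouring, each $D[C_i]$ is acyclic and hence admits a topological order $\prec_i$, meaning that $(u,v)\in A(D[C_i])$ implies $u\prec_i v$. I would then define $\prec$ by concatenating these orders: vertices of $C_1$ come first, ordered by $\prec_1$, then vertices of $C_2$ ordered by $\prec_2$, and so on up to $C_k$.

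The key step is to check that $\phi$ is itself a proper $k$-colouring of the (underlying graph of the) backedge digraph $D[\prec]$. Consider any arc $(v,u)\in A(D)$ with $u\prec v$, contributing an edge of $D[\prec]$; I want to show $\phi(u)\ne \phi(v)$. If on the contrary $\phi(u)=\phi(v)=i$, then $u,v\in C_i$ and $(v,u)\in A(D[C_i])$, so by the topological order $v\prec_i u$, and since $u,v$ lie in the same block $C_i$ of the concatenation this gives $v\prec u$, contradicting $u\prec v$. Hence $\chi(D[\prec])\le k=\dich(D)$, which combined with the easy direction yields the claimed equality.

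There is essentially no serious obstacle here; the whole argument is a bookkeeping verification once the right total order is written down. The only thing to be careful about is the convention for $D[\prec]$ (backedges go from later to earlier vertices), which must be matched correctly with the direction of the topological orders $\prec_i$ so that arcs internal to a colour class become forward arcs rather than backedges.
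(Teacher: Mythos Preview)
Your proof is correct and follows essentially the same approach as the paper: from an optimal dicolouring, concatenate topological orders of the colour classes to obtain a total order $\prec$ for which the dicolouring is a proper colouring of $D[\prec]$. Your verification of the direction conventions is in fact more explicit than the paper's.
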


\begin{proof}
    We have already shown that $\dich(D) \le \chi(D[\prec])$ for every total order $\prec$ on $V(D)$.
    Let us prove the other direction.
    Let $k\coloneqq \dich(D)$, and let $\phi$ be a $k$-dicolouring of $D$. 
    Let $C_1, \ldots, C_k$ be the colour classes of $\phi$. For every $i\in [k]$, $D[C_i]$ is acyclic, and so induces a partial order on $C_i$. Let $\prec_i$ be a total order of $C_i$ that extends this partial order.
    For every $u,v \in V(D)$, we let $u\prec v \equiv (\phi(u) < \phi(v)) \vee (v \prec_{\phi(v)} u)$.
    It is straightforward that each $C_i$ is an independent set in $D[\prec]$, and so $\phi$ is a proper $k$-colouring of $D[\prec]$. This proves that $\chi(D[\prec]) \le \dich(D)$, as desired.
\end{proof}

Theorem~\ref{thm:order-colouring} reduces the problem of finding a minimum dicolouring of a digraph $D$ to finding a total order $\prec$ on $V(D)$ that minimises $\chi(D[\prec])$. Hence, any order $\prec$ that makes $D[\prec]$ sparse in a sense that lets us bound its chromatic number efficiently is of interest.
In the following, we prove that we can halve the maximum degree of a digraph with a well-chosen order.

\begin{lemma}
\label{lem:order-half-degree}
Let $D$ be a digraph of maximum total degree $\Delta$. Then there exists a total order $\prec$ on $V(D)$ such that $\deg_{D[\prec]}(v) \le \floor{\deg_D(v)/2} $ for every vertex $v\in V(D)$.
\end{lemma}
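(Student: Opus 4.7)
The plan is to prove this lemma by induction on the number of vertices $|V(D)|$, the base case $|V(D)|\le 1$ being immediate. For the inductive step, I would try to identify a vertex $v$ to place at one extremity of $\prec$, and apply the inductive hypothesis to $D-v$. Since $\sum_{v\in V(D)} d^+(v) = |A(D)| = \sum_{v\in V(D)} \deg_D(v)/2$, an averaging argument produces some $v$ with $d^+(v) \le \floor{\deg_D(v)/2}$ (and symmetrically some vertex with $d^-$ at most this same quantity). Placing such a $v$ at the top of $\prec$ turns every out-arc of $v$ into a back-arc and every in-arc into a forward one, giving $\deg_{D[\prec]}(v) = d^+(v) \le \floor{\deg_D(v)/2}$.

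Invoking the inductive hypothesis on $D-v$ then furnishes an order $\prec'$ of $V(D)\setminus\{v\}$ that witnesses the bound for $D-v$. Extending $\prec'$ by appending $v$ on top produces $\prec$, and I would verify the bound at every other vertex $u$. Since $\deg_{D[\prec]}(u)=\deg_{(D-v)[\prec']}(u)+[u\in N^+_D(v)]$ and $\deg_D(u) = \deg_{D-v}(u) + \#\{\mbox{arcs between } u,v\}$, a short case analysis on the arcs between $u$ and $v$ and on the parity of $\deg_D(u)$ transfers the bound in all but one configuration: when $u\in N^+_D(v)\setminus N^-_D(v)$ and $\deg_D(u)$ is odd, an extra back-arc appears at $u$ while the target $\floor{\deg_D(u)/2}=\floor{\deg_{D-v}(u)/2}$ does not grow.

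The hardest part will be to deal with this obstruction. A first line of attack is to exploit the freedom between top- and bottom-placements: by choosing the extremity according to the parities of the neighbours of $v$, one can often avoid the problematic configuration. If the choice of extremity is not always sufficient, I would strengthen the inductive statement so as to guarantee strict slack at vertices of odd degree adjacent to $v$ through a single-direction arc; alternatively one can forgo induction in favour of an extremal approach, taking an order $\prec$ that minimises a potential such as
\[ \Phi(\prec) = \sum_v \max\pth{0,\ \deg_{D[\prec]}(v) - \floor{\deg_D(v)/2}},\]
and showing that whenever $\Phi(\prec)>0$ any violating vertex can be moved to a position at which its back-degree is at most $\min(d^+(v),d^-(v)) \le \floor{\deg_D(v)/2}$; this strictly reduces $\Phi$, provided one tracks an appropriate secondary tie-breaking potential to control the $\pm 1$ side effects produced at the vertices crossed by the move.
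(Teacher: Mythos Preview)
Your proposal is really a plan with acknowledged gaps rather than a proof. You correctly identify the obstruction in the inductive approach (the odd-degree out-neighbour case), and you correctly sense that an extremal argument is the way out, but your potential $\Phi(\prec)=\sum_v \max\bigl(0,\deg_{D[\prec]}(v)-\lfloor\deg_D(v)/2\rfloor\bigr)$ creates exactly the difficulty you describe: relocating a violating vertex $v$ can raise the back-degree of each neighbour it crosses by~$1$, so $\Phi$ need not drop, and you are left invoking an unspecified ``secondary tie-breaking potential''.

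The paper's proof is the same extremal idea with a much simpler potential: take $\prec$ minimising the \emph{total} number of back-edges $|E(D[\prec])|$. If some $v$ violates the bound, split $N_D(v)$ into $\pred(v)$ and $\suc(v)$; since $\deg_{D[\prec]}(v)>\deg_D(v)/2$, the back-edges at $v$ outnumber the forward ones on at least one of the two sides. Moving $v$ to the corresponding extremity reclassifies only the edges incident to $v$ and strictly decreases $|E(D[\prec])|$, contradicting minimality. The point you were missing is that with this global potential the ``$\pm 1$ side effects'' at the crossed vertices are already accounted for in the single count of edges incident to $v$, so no secondary potential is needed.
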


\begin{proof}
    Let $\prec$ be an order that minimises the number of edges in $D[\prec]$. Let us prove that $\prec$ has the desired property.
    For the sake of contradiction, assume that there is a vertex $v$ such that $\deg_{D[\prec]}(v) > \floor{\deg_D(v)/2}$. Let $\pred(v)$ be the set of neighbours $u$ of $v$ in $D$ such that $u\prec v$, and let $\suc(v) = N_D(v) \setminus \pred(v)$. 
    Then $v$ is adjacent in $D[\prec]$ with more than half the vertices in either $\suc(v)$ or $\pred(v)$, say in $\suc(v)$ without loss of generality. 
    Then, letting $\prec'$ be the order obtained from $\prec$ after making $v$ the largest element, we infer that $D[\prec']$ has strictly fewer edges than $D[\prec]$, a contradiction. 
\end{proof}

Given a class of graphs $\sG$, let $\chi(\sG) \coloneqq \max\, \{\chi(G) : G\in \sG\}$ and $\dich(\sG) \coloneqq \max\, \{\dich(G) : G\in \sG\}$.
For every integer $d$, we let $\sG_{\Delta\le d} \coloneqq \{G \in \sG : \Delta(G) \le d\}$ be the class of graphs of maximum degree at most $d$ in $\sG$.
Theorem~\ref{thm:order-colouring} applied together with Lemma~\ref{lem:order-half-degree} yields the following result as a corollary.

\begin{corollary}
\label{cor:dich_class}
    For every hereditary class of graphs $\sG$, and every integer $d$, one has
    \[ \dich(\sG_{\Delta \le d}) \le \chi(\sG_{\Delta \le \floor{d/2}}). \]
\end{corollary}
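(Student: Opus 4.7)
The plan is to derive the corollary by combining Theorem~\ref{thm:order-colouring} with Lemma~\ref{lem:order-half-degree}, the two results that immediately precede the statement. I would first fix an arbitrary graph $G\in \sG_{\Delta\le d}$ and an orientation $D$ of $G$ whose dichromatic number realises $\dich(G)$ (recall that, for a graph, $\dich$ is the maximum of $\dich$ over all orientations).

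The central step is to apply Lemma~\ref{lem:order-half-degree} to $D$: this produces a total order $\prec$ on $V(D)$ such that every vertex $v$ satisfies $\deg_{D[\prec]}(v)\le \floor{\deg_D(v)/2}\le \floor{d/2}$. Forgetting the orientation of the backedges, $D[\prec]$ is then a spanning subgraph of $G$ whose maximum degree does not exceed $\floor{d/2}$. Since $\sG$ is hereditary (understood here as closure under taking subgraphs, which is automatic for the main class of interest in this paper, namely triangle-free graphs), the underlying graph of $D[\prec]$ lies in $\sG_{\Delta\le \floor{d/2}}$. Consequently $\chi(D[\prec])\le \chi(\sG_{\Delta\le \floor{d/2}})$.

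Finally, Theorem~\ref{thm:order-colouring} gives $\dich(D)\le \chi(D[\prec])$, and chaining the two inequalities yields $\dich(G)=\dich(D)\le \chi(\sG_{\Delta\le \floor{d/2}})$. Taking the maximum over $G\in \sG_{\Delta\le d}$ produces the desired bound. I do not foresee any real obstacle in this proof: it is a direct concatenation of the two preceding results. The only point requiring mild care is the precise meaning of ``hereditary''—we are implicitly using closure under edge deletion rather than merely under induced subgraphs—but this is satisfied by triangle-free graphs and by every natural class to which the corollary is meant to be applied.
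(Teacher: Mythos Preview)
Your proposal is correct and follows exactly the route the paper intends: the corollary is stated immediately after Theorem~\ref{thm:order-colouring} and Lemma~\ref{lem:order-half-degree} with the remark that it is obtained by applying them together, and your argument spells out precisely that concatenation. Your caveat about the meaning of ``hereditary'' (needing closure under subgraphs, not merely induced subgraphs, so that the backedge graph $D[\prec]$ stays in $\sG$) is well observed and is indeed the only point where any care is required.
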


If we apply Corollary~\ref{cor:dich_class} to the class of triangle-free graphs, and use the bound given by the Johansson-Molloy theorem \cite{Mol19}, we obtain the following. 

\begin{thm}
    \label{thm:johansson-dicolouring}
    For every triangle-free graph $G$ of maximum degree $\Delta$, one has 
    \[\dich(G) \le (1+o(1)) \frac{\Delta}{2\ln \Delta} \quad \mbox{as $\Delta \to \infty$.}\]
\end{thm}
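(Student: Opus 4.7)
The plan is to unpack Corollary~\ref{cor:dich_class} applied to the hereditary class of triangle-free graphs and combine it with the Johansson--Molloy theorem. Fix an arbitrary orientation $D$ of $G$; it suffices to bound $\dich(D)$, since $\dich(G)$ is the maximum of $\dich(D)$ over all such orientations.

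By Lemma~\ref{lem:order-half-degree} applied to $D$, there exists a total order $\prec$ on $V(D)$ such that the backedge graph $D[\prec]$ satisfies $\Delta(D[\prec]) \le \floor{\Delta/2}$. Crucially, $D[\prec]$ is a subgraph of the underlying graph of $D$ — which is exactly $G$ — so $D[\prec]$ inherits triangle-freeness from $G$. The Johansson--Molloy theorem then yields
\[ \chi(D[\prec]) \le (1+o(1))\frac{\floor{\Delta/2}}{\ln \floor{\Delta/2}} \]
as $\Delta \to \infty$. Theorem~\ref{thm:order-colouring} provides the inequality $\dich(D) \le \chi(D[\prec])$, and combining these two estimates produces the announced bound on $\dich(D)$, and hence on $\dich(G)$.

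The only residual step is the routine asymptotic simplification $\floor{\Delta/2}/\ln \floor{\Delta/2} = (1+o(1))\Delta/(2\ln \Delta)$, which follows from $\ln \floor{\Delta/2} = \ln \Delta - \ln 2 + O(1/\Delta) \sim \ln \Delta$. There is essentially no obstacle to overcome here: the argument is a direct consequence of the machinery already in place (Theorem~\ref{thm:order-colouring} and Lemma~\ref{lem:order-half-degree}), with all the analytical difficulty hidden inside the black-box of Johansson--Molloy. The only small point to double-check is that $\sG$ being the class of triangle-free graphs is indeed hereditary and that the backedge graph is truly a subgraph of $G$ (both of which are immediate), so that the triangle-free hypothesis transfers from $G$ to $D[\prec]$ as claimed.
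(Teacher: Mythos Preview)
Your proposal is correct and is exactly the paper's intended argument: the paper derives the theorem as an immediate application of Corollary~\ref{cor:dich_class} (itself proved via Theorem~\ref{thm:order-colouring} and Lemma~\ref{lem:order-half-degree}) to the hereditary class of triangle-free graphs together with the Johansson--Molloy bound, and you have simply unpacked that corollary. The only cosmetic difference is that the paper states the deduction in one line rather than writing out the chain $\dich(D)\le \chi(D[\prec])\le (1+o(1))\floor{\Delta/2}/\ln\floor{\Delta/2}$ explicitly.
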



We are now ready to prove the main result of this section. 

\begin{thm}
\label{thm:dich-ramsey}
For every triangle-free graph $G$ of order $n$, one has \[\dich(G) \le (1+o(1)) \sqrt{\frac{2n}{\ln n}} \text{\quad as $n\to \infty$.}\]
\end{thm}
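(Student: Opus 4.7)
The plan is to dicolour an arbitrary orientation $D$ of $G$ iteratively, combining two phases. In the first phase, as long as the maximum degree is large, we peel off closed neighbourhoods one colour at a time. In the second phase, once the maximum degree has dropped below a carefully chosen threshold $\sqrt{2n\ln n}$, we finish in one shot using Theorem~\ref{thm:johansson-dicolouring}. The threshold is chosen so that the two phases telescope exactly to the claimed $(1+o(1))\sqrt{2n/\ln n}$.

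Concretely, set $D_0 \coloneqq D$, $n_i \coloneqq |V(D_i)|$ and $\Delta_i\coloneqq \Delta(D_i)$, and iterate: as long as $\Delta_i \ge \sqrt{2n_i \ln n_i}$, pick a vertex $v_i$ of maximum degree in $D_i$, assign a new colour to $N_{D_i}[v_i]$, and set $D_{i+1} \coloneqq D_i - N_{D_i}[v_i]$. The set $N_{D_i}[v_i]$ is indeed acyclic in $D_i$: the underlying graph of $D_i[N_{D_i}[v_i]]$ is a star $K_{1,\Delta_i}$ (because $N(v_i)$ is independent in $G$, which is triangle-free), and any orientation of a star is acyclic. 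Moreover, $n_{i+1} \le n_i - \Delta_i - 1 \le n_i - \sqrt{2n_i\ln n_i}$. Let $i^*$ be the first index with $\Delta_{i^*} < \sqrt{2n_{i^*}\ln n_{i^*}}$; we dicolour the remaining digraph $D_{i^*}$ with a fresh palette by applying Theorem~\ref{thm:johansson-dicolouring}.

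For the analysis, set $F(N) \coloneqq \int_2^N dm/\sqrt{2m\ln m}$; an integration by parts (or L'Hôpital's rule applied to $F(N)\sqrt{\ln N/(2N)}$) yields $F(N) = (1+o(1))\sqrt{2N/\ln N}$ as $N \to \infty$. Since the integrand is decreasing, a standard Riemann-sum comparison gives $i^* \le \sum_{i < i^*}(n_i - n_{i+1})/\sqrt{2n_i\ln n_i} \le F(n) - F(n_{i^*})$. For the second phase, using that $x \mapsto x/(2\ln x)$ is increasing for $x \ge e$, Theorem~\ref{thm:johansson-dicolouring} gives
\[ \dich(D_{i^*}) \le (1+o(1))\frac{\Delta_{i^*}}{2\ln \Delta_{i^*}} \le (1+o(1))\frac{\sqrt{2n_{i^*}\ln n_{i^*}}}{\ln n_{i^*}} = (1+o(1))\sqrt{\frac{2n_{i^*}}{\ln n_{i^*}}} = (1+o(1))F(n_{i^*}).\]
Summing the contributions of the two phases, the total number of colours used is at most $F(n) - F(n_{i^*}) + (1+o(1))F(n_{i^*}) = (1+o(1))F(n) = (1+o(1))\sqrt{2n/\ln n}$, as desired.

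The key conceptual step is the choice of the threshold $\sqrt{2n_i\ln n_i}$, which is calibrated so that Theorem~\ref{thm:johansson-dicolouring} at the terminal configuration and the integrated peeling cost have matching leading behaviour $\sqrt{2n/\ln n}$, allowing the contributions $F(n_{i^*})$ to cancel. The main technical subtlety lies in bookkeeping the $o(1)$ error terms across both phases; the edge case where $n_{i^*}$ remains bounded as $n \to \infty$ is handled trivially by the bound $\dich(D_{i^*}) \le n_{i^*} = O(1) = o(\sqrt{2n/\ln n})$.
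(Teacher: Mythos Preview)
The proposal is correct and follows essentially the same approach as the paper's proof (which in turn adapts \cite{DaIl22+}): peel off the neighbourhood of a maximum-degree vertex as a single colour class while $\Delta \ge \sqrt{2n\ln n}$, then finish with Theorem~\ref{thm:johansson-dicolouring}. Your integral comparison $i^* \le F(n)-F(n_{i^*})$ is precisely the unrolled version of the paper's inductive step, which shows $f(n)-f(n-d(n)) \ge d(n)f'(n) \ge 1$ via concavity of $f(x)=n_0+(1+\eps)\sqrt{2x/\ln x}$; the two packagings are equivalent. The only cosmetic difference is that you remove the \emph{closed} neighbourhood (an oriented star, hence acyclic) whereas the paper removes the \emph{open} neighbourhood (an independent set); both are valid colour classes and the arithmetic is unaffected.
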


\begin{proof}
The following proof is similar in many aspects to that of \cite[Theorem~1]{DaIl22+}.
Let $0<\eps \le 1/2$, and let $d(n) \coloneqq \sqrt{2n\ln n}$ for all $n\ge 2$.
By Theorem~\ref{thm:johansson-dicolouring}, there exists $n_1$ such that, for every triangle-free digraph $D$ of maximum degree at most $d(n)$, one has $\dich(D) \le (1+\eps)\sqrt{2n/\ln n}$ if $n\ge n_1$.

Let $n_0 \coloneqq \max \,\{e^{\frac{1+\eps}{\eps}}, n_1\} \ge e^3$, and let $f(x) \coloneqq n_0 + (1+\eps) \sqrt{2x/\ln x}$ for all $x>1$.
Let us prove by induction on $n$ that, for every triangle-free graph $G$ of order $n$, one has $\dich(G) \le f(n)$.

If $n\le n_0$, then $n \le f(n)$ and the result holds trivially by assigning a distinct colour to every vertex.
We may now assume that $n\ge n_0$, which implies that $\frac{1}{\ln n} \le \frac{\eps}{1+\eps}$.
If $\Delta(G) \le d(n)$, this is a direct consequence of Theorem~\ref{thm:johansson-dicolouring} as explained above.
So may assume that $\Delta(G) \ge d(n)$. The neighbourhood of a vertex of maximum degree yields an independent set $I$ of size at least $d(n)$. Given a fixed orientation $D$ of $G$, any dicolouring of $D\setminus I$ may be extended to $D$ with an extra colour that we assign to the vertices in $I$. By the induction hypothesis, this shows that
\[ \dich(G) \le f(n-d(n)) + 1.\]
So the result holds if we can show that $f(n)-f(n-d(n))\ge 1$. The second derivative of $f$ is negative on the interval $[6,+\infty)$, so $f$ is concave on that interval. Since $n - d(n) \ge n_0 - d(n_0) > 6$, this implies that 
\begin{align*}
    f(n)-f(n-d(n)) &\ge d(n)f'(n) = \sqrt{2n\ln n} \cdot (1+\eps) \frac{\ln n-1}{(\ln n)^2}\sqrt{\frac{\ln n}{2n}} = (1+\eps)\pth{1-\frac{1}{\ln n}} \\
    &\ge (1+\eps)\pth{1-\frac{\eps}{1+\eps}} = 1,
\end{align*}
as desired.
\end{proof}



\section{A deterministic approach} \label{sec:det}


\subsection{Linear forests}

A \emph{directed linear forest} is a forest formed by a disjoint union of directed paths.
Given a directed graph $D$, we denote by $\aell(D)$ the maximum number of arcs  of a  directed linear forest of $D$.  
For every (undirected) graph $G$, we define $\aell(G) \coloneqq \min \; \{\aell(\aG) : \aG \mbox{ is an orientation of $G$}\}$.

\begin{lemma}
    \label{lem:LinearForest}
    For every graph $G$ of order $n$, $\aell(G) = n-\alpha(G)$.
\end{lemma}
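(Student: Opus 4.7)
The plan is to prove the two inequalities $\aell(G) \le n - \alpha(G)$ and $\aell(G) \ge n - \alpha(G)$ separately, by reframing $\aell(\aG)$ in terms of minimum path partitions of the vertex set of $\aG$.

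The first step, common to both bounds, is the observation that for every orientation $\aG$, we have $\aell(\aG) = n - p^*(\aG)$, where $p^*(\aG)$ is the minimum number of directed paths needed to partition $V(\aG)$. Indeed, any directed linear forest can be made spanning by adding isolated vertices (paths of length $0$) without removing any arc, and a spanning directed linear forest with $p$ components has exactly $n-p$ arcs. Hence maximizing arcs amounts to minimizing the number of paths in a partition.

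For the lower bound $\aell(G) \ge n - \alpha(G)$, I will invoke the Gallai--Milgram theorem, which states that every digraph $D$ admits a path partition into at most $\alpha(D)$ paths, where $\alpha(D)$ denotes the maximum size of a set of vertices pairwise non-adjacent in $D$. For an orientation $\aG$ of $G$, a set is independent in $\aG$ iff it is independent in $G$, so $\alpha(\aG) = \alpha(G)$. Thus $p^*(\aG) \le \alpha(G)$ for every orientation $\aG$, which yields $\aell(\aG) \ge n - \alpha(G)$, and taking the minimum over orientations gives the desired bound.

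For the upper bound $\aell(G) \le n - \alpha(G)$, I need to exhibit an orientation $\aG$ with $p^*(\aG) \ge \alpha(G)$. The construction is as follows: fix a maximum independent set $I \subseteq V(G)$ with $|I| = \alpha(G)$, orient every edge between $I$ and $V(G) \setminus I$ from $V(G)\setminus I$ towards $I$ (so each vertex of $I$ is a sink in $\aG$), and orient the edges inside $G[V(G)\setminus I]$ arbitrarily. In any directed path, a sink can only occur as the final vertex, so every directed path in $\aG$ contains at most one vertex of $I$. Consequently, any path partition of $V(\aG)$ must contain at least $|I| = \alpha(G)$ paths in order to cover $I$, yielding $p^*(\aG) \ge \alpha(G)$, hence $\aell(\aG) \le n - \alpha(G)$.

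No step is really difficult; the main subtlety is to correctly translate between ``maximum arcs in a linear forest'' and ``minimum paths in a partition'', and to remember that Gallai--Milgram is the right black box for the lower bound. The sink construction for the upper bound is the only genuinely ad hoc ingredient, and it works because in a directed linear forest every vertex has out-degree at most one.
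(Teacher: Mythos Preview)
Your proof is correct and follows essentially the same approach as the paper: both reduce to the relation between arcs in a spanning directed linear forest and the number of its components, use Gallai--Milgram for the bound $\aell(G)\ge n-\alpha(G)$, and exhibit an orientation in which the vertices of a maximum independent set all lie in distinct paths for the reverse inequality. The only cosmetic difference is that the paper makes the vertices of $I$ sources (in-degree $0$) while you make them sinks, which is symmetric.
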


\begin{proof}
Let $\aG$ be an orientation of $G$.
Every directed linear forest of $\aG$ may trivially be extended into a spanning directed linear forest with the same number of arcs, by adding all uncovered vertices as isolated trees of order $1$. Hence $\aell(\aG)$ is also the maximum number of arcs of a spanning directed linear forest of $\aG$.
Now, for a spanning directed linear forest $F$, the number of arcs of $F$ plus the number of paths of $F$ equals $n$. 
By the Gallai-Milgram Theorem \cite{GaMi60}, $\aG$ can be covered with at most $\alpha(G)$ directed paths, which together form a directed linear forest of size at least $n-\alpha(G)$. 
Hence $\aell(G) 
\geq n - \alpha(G)$.

To show the other direction, let $I$ be a maximum independent set of $G$, and let $D$ be an orientation of $G$ where each vertex $x\in I$ has in-degree $0$. Let $F$ be any directed linear forest of $D$; each vertex $x\in I$ belongs to a distinct directed path in $F$. So $F$ contains at least $\alpha(G)$ distinct paths, hence it has size at most $n-\alpha(G)$.
\end{proof}


\subsection{The construction}

Let $D$ be a digraph. The \emph{$n$-backward-blowup} $D^{\gets n}$ of $D$ is the digraph defined by 

\begin{eqnarray*}
V(D^{\gets n})  & = & \{(v,i) \mid v\in V(D), i\in [n]\};\\
A(D^{\gets n}) & = & F(D^{\gets n}) \cup B(D^{\gets n}) ,  \mbox{~~with} \\
F(D^{\gets n}) & = & \{(u,i) \to (v,i) \mid u\to v \in A(D), i\in [n]\} ,  \mbox{~and}\\
B(D^{\gets n}) & = & \{(v,i) \to (u,j) \mid u\to v \in A(D), i\in [n], j\in [n], \mbox{~and~} i\neq j \}
\end{eqnarray*}

$F(D^{\gets n})$ is the set of \emph{forward arcs} of $D^{\gets n}$ and $B(D^{\gets n})$ is the set of \emph{backward arcs} of $D^{\gets n}$.
For $v\in V(D)$, $\{(v,i) \mid i\in [n]\}$ is the \emph{pack} of $v$. 
Two vertices in a same pack are called  \emph{twins}. 

\begin{figure}[ht]
    \centering
\begin{tikzpicture}[>=stealth, very thick,decoration={
    markings, mark=at position 0.85 with {\arrow{>}}}] 

    \draw[orange, postaction={decorate}] (4, 1) -- (0, 0);
    \draw[orange, postaction={decorate}] (4, 2) -- (0, 0);
    \draw[orange, postaction={decorate}] (4, 3) -- (0, 0);

    \draw[orange, postaction={decorate}] (4, 0) -- (0, 1) ;
    \draw[orange, postaction={decorate}] (4, 2) -- (0, 1);
    \draw[orange, postaction={decorate}] (4, 3) -- (0, 1);

    \draw[orange, postaction={decorate}] (4, 0) -- (0, 2);
    \draw[orange, postaction={decorate}] (4, 1) -- (0, 2);
    \draw[orange, postaction={decorate}] (4, 3) -- (0, 2);

    \draw[orange, postaction={decorate}] (4, 0) -- (0, 3);
    \draw[orange, postaction={decorate}] (4, 1) -- (0, 3);
    \draw[orange, postaction={decorate}] (4, 2) -- (0, 3);
    
    \draw[cyan, postaction={decorate}] (0, 0) -- (4, 0);
    \draw[cyan, postaction={decorate}] (0, 1) -- (4, 1);
    \draw[cyan, postaction={decorate}] (0, 2) -- (4, 2);
    \draw[cyan, postaction={decorate}] (0, 3) -- (4, 3);

    \draw(0,0) node {$\bullet$} (0,1) node {$\bullet$} (0,2) node {$\bullet$} (0,3) node {$\bullet$} (4,0) node {$\bullet$} (4,1) node {$\bullet$} (4,2) node {$\bullet$} (4,3) node {$\bullet$};
    \draw (0,0) node[left] {$u_4$} (0,1) node[left] {$u_3$} (0,2) node[left] {$u_2$} (0,3) node[left] {$u_1$} 
    (4,0) node[right] {$v_4$} (4,1) node[right] {$v_3$} (4,2) node[right] {$v_2$} (4,3) node[right] {$v_1$}; 
\end{tikzpicture}
\end{figure}

\begin{lemma}
\label{lem:blowup}
Let $k$ be a positive integer, and let $G$ be a graph such that $\chi(G)>k$.
Then, for every orientation $\aG$ of $G$, the $(k\aell(\aG)+1)$-backward-blowup of $\aG$ is not $k$-dicolourable.
\end{lemma}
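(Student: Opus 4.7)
My plan is to derive a contradiction from an assumed $k$-dicoloring $\phi$ of $\aG^{\gets n}$ with $n=k\aell(\aG)+1$, by exhibiting a directed cycle inside one of the color classes. The key structural lemma I would prove first is that for every color $\alpha\in [k]$ and every arc $e=u\to v$ of $\aG$, the set $I_\alpha(e) \coloneqq \sset{i\in [n]}{\phi(u,i)=\phi(v,i)=\alpha}$ has size at most $1$: indeed, two distinct indices $i,j\in I_\alpha(e)$ immediately yield the directed $4$-cycle $(u,i)\to (v,i)\to (u,j)\to (v,j)\to (u,i)$ inside $C_\alpha \coloneqq \phi^{-1}(\alpha)$, using one forward arc on each diagonal and one backward arc across each pair of distinct indices.

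With this constraint in place, I would invoke the hypothesis $\chi(G)>k$ level by level. For each $i\in [n]$, the map $v\mapsto \phi(v,i)$ is an improper $k$-coloring of $G$, so we may choose a monochromatic arc $e_i\in A(\aG)$ at that level, with common color $\alpha_i$. The size bound above makes the map $i\mapsto (e_i,\alpha_i)$ injective, producing $n=k\aell(\aG)+1$ distinct pairs. Pigeonholing on the color coordinate yields a single color $\alpha^*$ and a set $F\subseteq A(\aG)$ of at least $\aell(\aG)+1$ distinct arcs, each bearing a unique index $i_e$ such that both endpoints of $e$ are colored $\alpha^*$ at level $i_e$. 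By the definition of $\aell(\aG)$, the subdigraph $F$ of $\aG$ cannot be a directed linear forest.

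The remaining task is to convert this structural failure of $F$ into a directed cycle in $C_{\alpha^*}$, which is the main bookkeeping step. A failure to be a directed linear forest means $F$ exhibits one of three obstructions: (i) two arcs sharing a head, (ii) two arcs sharing a tail, or (iii) an underlying cycle. In cases (i) and (ii), the two arcs have distinct indices $i_e\ne i_{e'}$ by injectivity, and the same four-vertex pattern as in the structural lemma yields a directed $4$-cycle in $C_{\alpha^*}$; case (iii) reduces to (i)/(ii) at a vertex where the underlying cycle changes orientation, unless the cycle in $F$ is itself a directed cycle $v_1\to \ldots \to v_\ell\to v_1$. For this last subcase I plan to use the backward arcs $(v_{j+1},i_j)\to (v_j,i_{j-1})$ (indices mod $\ell$), which I expect to chain into a directed $\ell$-cycle inside $C_{\alpha^*}$: each endpoint is forced to be colored $\alpha^*$ by the monochromaticity of the consecutive arcs $e_{j-1}$ and $e_j$ at their respective levels, and the index-shift $i_j\ne i_{j-1}$ is exactly what makes the arc backward rather than forward. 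Checking that this walk indeed stays inside $C_{\alpha^*}$ and closes up is the most delicate point of the proof.
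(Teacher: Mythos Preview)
Your proposal is correct and follows essentially the same approach as the paper's proof: find a monochromatic forward arc in each of the $k\aell(\aG)+1$ copies of $\aG$, pigeonhole on colour, and argue that the projected arc set in $\aG$ cannot be a directed linear forest, with the obstruction (repeated arc, shared head, shared tail, or directed cycle) yielding a monochromatic directed cycle in the blowup via the backward arcs. The only cosmetic difference is that you isolate the ``at most one level per arc and colour'' observation as a preliminary lemma, whereas the paper establishes it inline after pigeonholing; your closing directed-cycle construction matches the paper's and does close up, since the indices $i_j$ attached to the distinct arcs $v_j\to v_{j+1}$ are pairwise distinct.
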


\begin{proof}
Let $\aG$ be an orientation of $G$ and $D$ be the $(k\aell(\aG)+1)$-backward-blowup of $\aG$, and let $\phi$ be any $k$-colouring of the vertices of $D$. Let us assume for the sake of contradiction that  $\phi$ induces no monochromatic directed cycle.

The forward arcs of $D$ form $k\aell(\aG)+1$ pairwise disjoint copies of $\vec{G}$. Since $\chi(G)>k$, there is at least one monochromatic arc in each of these copies; let us pick one for each copy arbitrarily and obtain a matching $M$ of $k\aell(\aG)+1$ monochromatic forward arcs. By the Pigeonhole Principle, we can find a set $N$ of $\aell(\aG)+1$ arcs of $M$ with the same colour. 

If two arcs of $N$ lie between the same pair of packs (say $u\to v$ and $u'\to v'$ where $(u,u')$ and $(v,v')$ are pairs of twins), they induce a monochromatic directed $4$-cycle (namely $u\to v \to u' \to v' \to u$), a contradiction.

If $N$ contains two arcs $u \to v$ and $w \to v'$ where $v$ and $v'$ are twins, then $u\to v \to w \to v' \to u$ is a monochromatic directed $4$-cycle in $D$, a contradiction.
Similarly, if $N$ contains two arcs $v \to u$ and $v' \to w$ where $v$ and $v'$ are twins, then $v\to u \to v' \to w \to v$ is a monochromatic directed $4$-cycle in $D$, a contradiction.

Henceforth, the graph $F$ induced by $N$ after contracting each pack into a single vertex has maximum in- and out-degree $1$. If $F$ contains a directed cycle, then there are $p$ arcs of $N$ that may be labelled $u_1 \to u'_2, u_2 \to u'_3, \ldots, u_{p-1} \to u'_p, u_p \to u'_1$ so that $u_i,u'_i$ are twins for every $i\in [p]$. We infer that $u_1 \to u_p \to \ldots \to u_2 \to u_1$ is a monochromatic directed cycle in $D$, a contradiction. 
We conclude that $F$ is a directed linear forest. Since $F$ is composed only of forward arcs, we infer that $F$ is a subdigraph of $\aG$. So $F$ can contain at most $\aell(\aG)$ edges, a contradiction. 

This finishes the proof that $\phi$ induces a monochromatic cycle in $D$.
\end{proof}

In the above proof, we note that if $\aG$ is an acyclic orientation of $G$, then the monochromatic cycle constructed is always of length $4$, and alternates between forward and backward arcs.

By combining Lemma~\ref{lem:LinearForest} and Lemma~\ref{lem:blowup}, we have the following result as a corollary.

\begin{corollary}
\label{cor:blow-up}
    Let $k$ be a positive integer, and let $G$ be an $n$-vertex graph such that $\chi(G) > k$. Then there exists an orientation $\aG$ of $G$ such that the $\big(k\big(n-\alpha(G)\big)+1\big)$-backward-blowup of $\aG$ is not $k$-dicolourable.
\end{corollary}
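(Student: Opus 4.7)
The plan is to combine Lemma~\ref{lem:LinearForest} and Lemma~\ref{lem:blowup} directly; there is essentially no extra work beyond matching the two statements.

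First, I would invoke Lemma~\ref{lem:LinearForest}, which asserts that $\aell(G) = n - \alpha(G)$. Recalling the definition $\aell(G) = \min\,\{\aell(\aG) : \aG\text{ is an orientation of }G\}$, this tells us that the minimum is attained: there exists an orientation $\aG$ of $G$ with $\aell(\aG) = n - \alpha(G)$. This is the orientation we will use.

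Next, since by hypothesis $\chi(G) > k$, Lemma~\ref{lem:blowup} applies to this particular orientation $\aG$ and yields that the $(k\aell(\aG)+1)$-backward-blowup of $\aG$ is not $k$-dicolourable. Substituting the value $\aell(\aG) = n - \alpha(G)$ obtained in the previous step, this blowup is exactly the $(k(n-\alpha(G))+1)$-backward-blowup, which is the digraph we wanted.

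There is no real obstacle here: the content is entirely in the two preceding lemmas, and the corollary is simply the statement that the minimum in Lemma~\ref{lem:LinearForest} is realized by an orientation on which one can then apply Lemma~\ref{lem:blowup}. The only thing to watch is the monotonicity-in-$\aell$ direction, namely that a smaller $\aell(\aG)$ gives a smaller (and hence better) exponent in the backward-blowup, which is why taking the \emph{minimising} orientation from Lemma~\ref{lem:LinearForest} is the right choice.
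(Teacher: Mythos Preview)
Your proposal is correct and matches the paper's own treatment exactly: the paper simply states that the corollary follows ``by combining Lemma~\ref{lem:LinearForest} and Lemma~\ref{lem:blowup}'' without writing out any further details. Your two-step argument (pick an orientation achieving $\aell(\aG)=n-\alpha(G)$ via Lemma~\ref{lem:LinearForest}, then apply Lemma~\ref{lem:blowup}) is precisely the intended one.
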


As a consequence, the $19$-backward-blowup of some orientation of the Grötzsch graph $M_4$ is not $3$-dicolourable. This yields a $4$-dichromatic triangle-free graph on $209$  vertices. We note that $\min\; \{\aell(G) : \mbox{$G$ is a triangle-free graph and $\chi(G) > 3$}\} = 6$, so the Grötzch graph is best possible when we apply Corollary~\ref{cor:blow-up} with $k=3$.
This holds because, given a triangle-free graph $G$ such that $\chi(G)>3$, and any independent set $I$ of $G$, one has $\chi(G\setminus I)>2$. If we assume for the sake of contradiction that $|V(G\setminus I)|\le 5$, then $G\setminus I$ is isomorphic to $C_5$. Therefore, for every vertex $x\in I$, the neighbourhood of $x$ is an independent set of $C_5$, so it has size at most $2$. We conclude that $G$ is $2$-degenerate, which contradicts the fact that $\chi(G) > 3$.

\begin{proposition}
    There exists a $4$-dichromatic $209$-vertex triangle-free graph. 
\end{proposition}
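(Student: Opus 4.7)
The plan is to apply Corollary~\ref{cor:blow-up} directly with $k=3$ to the Grötzsch graph $M_4$, which is a triangle-free graph on $11$ vertices with $\chi(M_4) = 4 > 3$ and independence number $\alpha(M_4) = 5$. By Lemma~\ref{lem:LinearForest} one has $\aell(M_4) = 11 - 5 = 6$, so the corollary produces an orientation $\aG$ of $M_4$ such that the $\big(3 \cdot 6 + 1\big) = 19$-backward-blowup $D \coloneqq \aG^{\gets 19}$ is not $3$-dicolourable. The total number of vertices of $D$ is $11 \cdot 19 = 209$.

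Next I would verify that $D$ is triangle-free. Suppose that $(u,i), (v,j), (w,k)$ were pairwise adjacent in the underlying graph of $D$. Since two twins lying in the same pack are non-adjacent, the vertices $u, v, w$ must be three distinct vertices of $M_4$. Moreover, adjacency in the blowup between a vertex in the pack of $x$ and a vertex in the pack of $y$ requires $xy \in E(M_4)$, so $\{u, v, w\}$ would form a triangle in $M_4$, contradicting its triangle-freeness.

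Finally, to show that $\dich(D) = 4$ (and not larger), I would exhibit an explicit $4$-dicolouring. Fix any proper $4$-colouring $c$ of $M_4$ and define $\phi((v,i)) \coloneqq c(v)$ for every $(v,i) \in V(D)$. Every arc of $D$, whether a forward arc $(u,i) \to (v,i)$ or a backward arc $(v,i) \to (u,j)$ with $i \neq j$, arises from some arc $u \to v$ of $\aG$, so that $uv \in E(M_4)$ and $c(u) \neq c(v)$. Hence $\phi$ has no monochromatic arc, and a fortiori each of its colour classes is acyclic; thus $\dich(D) \le 4$. Combined with the lower bound $\dich(D) \ge 4$ from Corollary~\ref{cor:blow-up}, this yields $\dich(D) = 4$, completing the proof.

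There is no real obstacle here: all of the substantive work is packaged inside Corollary~\ref{cor:blow-up} and the discussion preceding the proposition (which establishes the optimality of $M_4$ as the base graph). The remaining verifications — that the blowup construction preserves triangle-freeness and that any proper $4$-colouring of the base lifts trivially to a $4$-dicolouring of the blowup because every arc crosses between two distinct packs — are routine sanity checks.
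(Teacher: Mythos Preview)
Your proof is correct and follows exactly the approach the paper takes: apply Corollary~\ref{cor:blow-up} with $k=3$ to the Gr\"otzsch graph $M_4$ (with $n=11$, $\alpha=5$), obtaining a $19$-backward-blowup on $209$ vertices that is not $3$-dicolourable. The paper leaves implicit the two sanity checks you spell out --- that the backward-blowup of a triangle-free oriented graph is still triangle-free, and that a proper $4$-colouring of $M_4$ lifts to a proper (hence acyclic) $4$-colouring of the blowup giving $\dich \le 4$ --- so your write-up is simply a more detailed version of the same argument.
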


We note that because of a result by Mohar and Wu \cite[Lemma 3.2]{MoWu16}, Corollary~\ref{cor:blow-up} does not provide smallest-known $k$-dichromatic triangle-free graphs when $k\ge 5$. 

\begin{lemma}[Mohar, Wu, 2016]
    \label{lem:MoharWu}
    Let $k$ be a positive interger, and let $G$ be a graph such that $\chi(G)>k$. Let $m$ be an integer that satisfies $ 2 + 2 \ln m \le \ceil{m/k}$.
    Then there is an orientation of the $m$-blowup of $G$ that is not $k$-dicolourable.
\end{lemma}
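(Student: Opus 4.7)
The plan is to apply the probabilistic method to a uniformly random orientation of $G^{(m)}$, in which every edge of the $m$-blowup is oriented independently and uniformly at random, and to show that with positive probability the resulting oriented graph admits no $k$-dicolouring.

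The first step is a \emph{majority reduction}: set $m' \coloneqq \ceil{m/k}$. Given any $k$-dicolouring $\phi$ of an orientation of $G^{(m)}$, the pigeonhole principle yields, for each $v\in V(G)$, a colour $c(v)\in [k]$ appearing on at least $m'$ vertices of the pack $P_v$ under $\phi$. Since $\chi(G) > k$, the induced colouring $c\colon V(G) \to [k]$ is not proper, so some edge $uv \in E(G)$ satisfies $c(u)=c(v)$; the corresponding monochromatic subsets $A' \subseteq P_u$ and $B' \subseteq P_v$ of size at least $m'$ must induce an acyclic bipartite sub-orientation for $\phi$ to be valid. Hence it suffices to construct an orientation with the property that, for every edge $uv\in E(G)$ and every pair of $m'$-subsets $A'\subseteq P_u$, $B'\subseteq P_v$, the induced bipartite sub-orientation contains a directed cycle.

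Next I would bound the probability that a fixed pair $(A', B')$ induces an acyclic orientation. Every acyclic orientation of $K_{m',m'}$ admits at least one topological order on its $2m'$ vertices, so the number of such orientations is at most $(2m')!$, yielding $\pr{A' \cup B' \text{ is acyclic}} \le (2m')!/2^{m'^2}$. Summing over all $\binom{m}{m'}^2$ pairs $(A', B')$ for a fixed edge $uv$ gives a per-edge union bound of $\binom{m}{m'}^2 (2m')!/2^{m'^2}$. The key structural observation is that the bipartite gadgets between distinct pairs of packs involve disjoint sets of random variables and are therefore mutually independent; consequently, if the per-edge bound is strictly less than~$1$, the probability that no bad event occurs is at least the product over edges of $(1 - \text{per-edge bound}) > 0$, and any orientation in this positive-probability event fails to be $k$-dicolourable by the majority reduction.

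The main obstacle will be to carry out the precise calculation that $\binom{m}{m'}^2(2m')!/2^{m'^2} < 1$ under the hypothesis $2+2\ln m \leq m'$. Using Stirling's approximation, together with the entropy bound $\log_2\binom{m}{m'}\leq m\, h_2(m'/m)$ (where $h_2$ denotes the binary entropy) and the constraint $m/m' \leq k$ provided by $m' = \ceil{m/k}$, one can reduce the task to a concrete inequality whose logarithm scales essentially as $m'^2 \ln 2$ on the negative side against $O(m' \ln m)$ on the positive side. The hypothesis $m' \geq 2+2\ln m$ is tuned precisely so that this difference is negative, but since the constants are tight, the crude bound $(2m')!$ on the number of acyclic orientations of $K_{m',m'}$ may need to be sharpened (e.g.\ via the chromatic-polynomial identity counting acyclic orientations) to exactly recover the stated hypothesis; the rest of the argument is routine given this verification.
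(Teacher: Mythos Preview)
The paper does not prove this lemma; it is quoted from Mohar and Wu~\cite{MoWu16} purely as a point of comparison with Corollary~\ref{cor:blow-up}, so there is no in-paper argument to compare against.

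Your outline is the natural one and is structurally correct: the majority reduction via pigeonhole, the observation that the resulting monochromatic $m'\times m'$ bipartite sub-tournament between the two packs must be acyclic for the dicolouring to survive, and the independence of the random bits across distinct edges of $G$ (so that a per-edge first-moment bound strictly below $1$ already suffices, with no need for the Local Lemma) are all sound.

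The gap is exactly where you flag it, and it is not merely cosmetic. Your placeholder bound $(2m')!$ on the number of acyclic orientations of $K_{m',m'}$ is far too loose. A usable replacement is $(m'!)^2\binom{2m'}{m'}$ --- an acyclic bipartite tournament is determined by a linear order on each side together with a monotone staircase boundary --- and this feeds into your union bound as
\[
\binom{m}{m'}^{\!2}(m'!)^2\binom{2m'}{m'}\,2^{-m'^2}\ \le\ m^{2m'}\,4^{m'}\,2^{-m'^2}\ <\ 1 \quad\text{once}\quad m' > 2\log_2 m + 2.
\]
Note the base: the denominator $2^{m'^2}$ inevitably produces $\log_2$, whereas the hypothesis as stated in the paper uses the \emph{natural} logarithm, $m' \ge 2 + 2\ln m$, a strictly weaker condition. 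No refinement of the acyclic-orientation count alone closes this gap, since that count is genuinely of order $(m'!)^2$. Either the constant in the paper's transcription of the Mohar--Wu lemma is slightly off, or the original proof contains an idea beyond the plain union bound over all $\binom{m}{m'}^2$ pairs; you should check the source directly rather than declare the remaining step ``routine''.
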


Namely, if we want to apply Corollary~\ref{cor:blow-up} to the $r$-backward-blowup of an orientation of $M_5$ (the Mycielskian of the Grötzsch graph), we need $r=49$. In contrast, Lemma~\ref{lem:MoharWu} implies that there exists an orientation of the $37$-blowup of $M_5$ that is not $4$-dicolourable.

\subsection{Getting a better bound in the case \texorpdfstring{$k=2$}{k=2}}

We note that Lemma~\ref{lem:blowup} is tight when applied to any acyclic orientation of $C_5$ with $k=2$. In particular, for the two of them that contain no directed path of length $4$, the $7$-backward-blowup is not $2$-dicolourable, and this is best possible as illustrated in Figure~\ref{fig:6blowup_C5}.
However, it turns out that Lemma~\ref{lem:blowup} is no longer tight if we apply it to the directed $5$-cycle, and this is the orientation of $C_5$ that yields the minimum size of the backward-blowup that makes it not $2$-dicolourable.

\begin{figure}[ht]
        \centering
        \includegraphics[height=5cm]{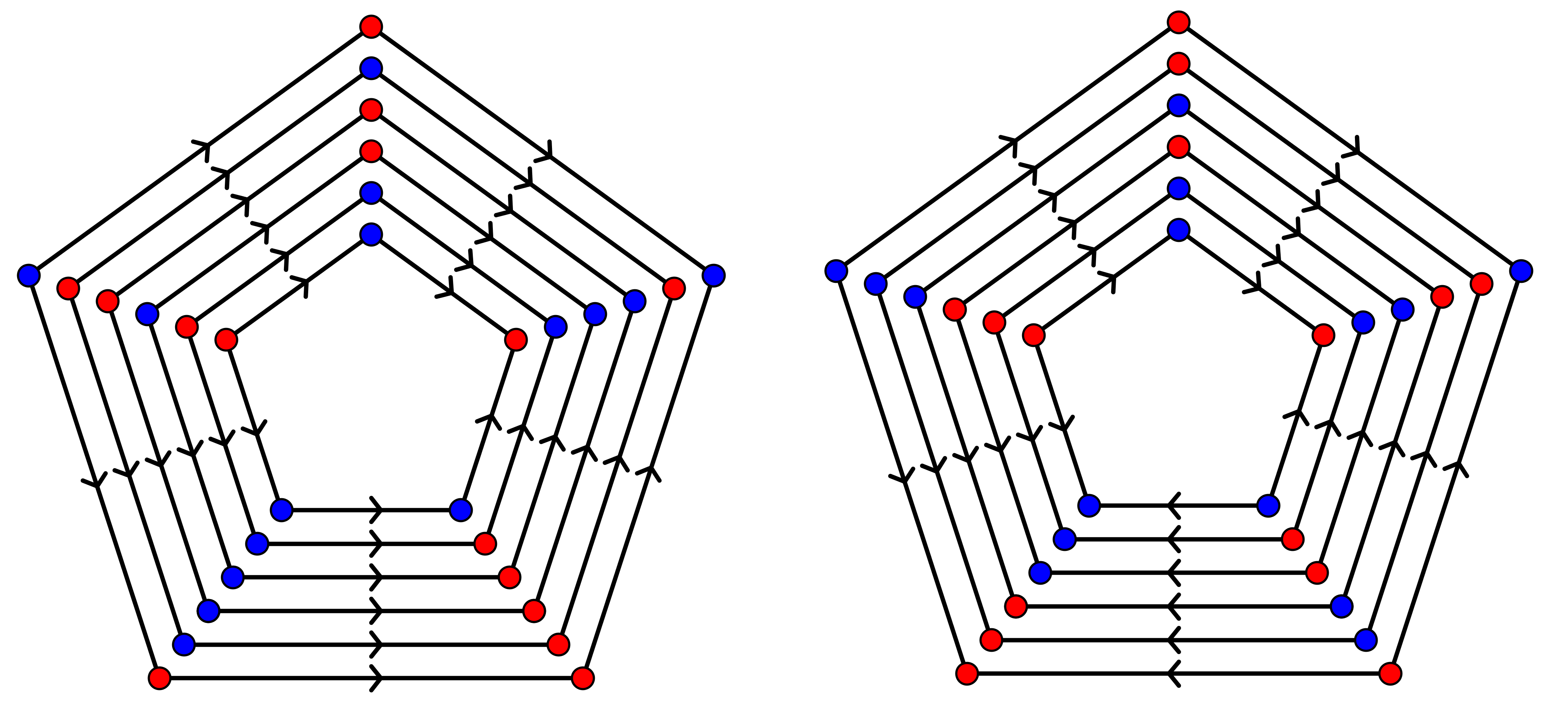}
        \caption{A 2-dicoloration of the 6-backward-blowup of the two orientations of $C_5$ that contain no directed path of length 4.}
        \label{fig:6blowup_C5}
    \end{figure}

In Proposition~\ref{prop:D25}, we show that the $5$-backward-blowup of $\overrightarrow{C_5}$ is not $2$-dicolourable, while Lemma~\ref{lem:blowup} only tells us that this holds for the $9$-backward-blowup of $\overrightarrow{C_5}$. More generally, it is possible to repeat the arguments used in the proof of Proposition~\ref{prop:D25} in order to show that the $\ell$-backward-blowup of $\overrightarrow{C_\ell}$ is not $2$-dicolourable, for every odd $\ell$.

\begin{proposition}
    \label{prop:D25}
    $D_{25}=\vec C_5^{\gets 5}$ is a $3$-dicritical oriented triangle-free graph on $25$ vertices
\end{proposition}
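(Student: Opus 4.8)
The plan is to verify three things about $D_{25} = \vec{C_5}^{\gets 5}$: that it is triangle-free, that it is not $2$-dicolourable (i.e. $\dich(D_{25}) \ge 3$), and that it is $3$-dicritical, meaning $\dich(D_{25}) = 3$ but every proper induced subdigraph is $2$-dicolourable.

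\textbf{Triangle-freeness.} First I would check directly from the definition of the backward-blowup that the underlying graph of $\vec{C_5}^{\gets n}$ is triangle-free for all $n$. Two vertices $(u,i)$ and $(v,j)$ are adjacent only if $uv \in E(C_5)$; hence any triangle in the underlying graph would have to use three vertices whose $C_5$-coordinates form a triangle in $C_5$, which is impossible since $C_5$ is triangle-free. (The only subtlety is twins, but $(v,i)$ and $(v,j)$ are never adjacent since the blowup only adds arcs between packs corresponding to arcs of $\vec{C_5}$.) So $D_{25}$ is an oriented triangle-free graph on $5 \times 5 = 25$ vertices.

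\textbf{Not $2$-dicolourable.} This is the main obstacle. Lemma~\ref{lem:blowup} applied with $k=2$ and $G = C_5$ (which has $\chi(C_5) = 3 > 2$) together with $\aell(\vec{C_5}) = 4$ only gives that the $9$-backward-blowup is not $2$-dicolourable, which is too weak. So I would need a dedicated argument exploiting the specific structure of the \emph{directed} $5$-cycle $\vec{C_5}$. Suppose for contradiction that $\phi$ is a $2$-dicolouring of $D_{25}$ with colours $\{1,2\}$. Each of the five forward copies of $\vec{C_5}$ (one for each $i \in [5]$) must contain a monochromatic arc, since $\vec{C_5}$ is an odd cycle and hence not $2$-colourable as an undirected graph. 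As in the proof of Lemma~\ref{lem:blowup}, if two such monochromatic forward arcs of the same colour lie between the same ordered pair of packs, or share a pack endpoint appropriately, we immediately get a monochromatic directed $4$-cycle using a backward arc. The refinement needed here is: with only $5$ packs arranged in a directed cycle and $5$ monochromatic forward arcs (pigeonholing, at least $3$ of one colour, say colour $1$), these $3$ arcs sit among the $5$ consecutive pack-pairs $P_{v_0}\!\to\!P_{v_1}, \ldots, P_{v_4}\!\to\!P_{v_0}$; I would argue that $3$ colour-$1$ forward arcs among $5$ positions of a directed pentagon, none sharing a pack vertex (else done), is impossible — any $3$ of the $5$ edges of $C_5$ must include two sharing a vertex. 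Then I would handle the shared-pack cases by producing an explicit monochromatic directed $4$-cycle alternating forward/backward arcs, exactly as in Lemma~\ref{lem:blowup}'s case analysis, using that there are $5 \ge 2$ twins in each pack so the two backward arcs needed can be chosen between distinct twin-indices. The arithmetic ``$5$ copies, $3$ of one colour, $3$ edges of $C_5$ always share a vertex'' is the crux that makes $5$ (rather than $9$) suffice.

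\textbf{$3$-dicriticality.} I must show $\dich(D_{25}) \le 3$ and that deleting any vertex drops the dichromatic number to $2$. For the upper bound, an explicit $3$-dicolouring suffices: for instance colour the pack $P_{v_\ell}$ by splitting its $5$ twins suitably so that no colour class contains a directed cycle — one can give each pack at most two colour classes cleverly, or just exhibit a pattern and check the (few) short cycles. For criticality, by vertex-transitivity considerations it is enough to check that $D_{25}$ minus one vertex, say $(v_0, 1)$, is $2$-dicolourable; the automorphism group of $\vec{C_5}^{\gets 5}$ (rotations of $C_5$ and permutations of twin-indices within packs, acting compatibly) acts transitively on vertices, so one deletion case covers all $25$. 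I would exhibit an explicit $2$-dicolouring of $D_{25} - (v_0,1)$ and verify no monochromatic directed cycle appears, using that every directed cycle in a backward-blowup of $\vec{C_5}$ either stays within one forward copy (but those are just $\vec{C_5}$, an odd cycle, easy) or alternates through backward arcs in a way that is blocked once one pack is shrunk to $4$ twins. The bookkeeping here is finite and routine, but somewhat tedious; I expect the non-$2$-dicolourability argument above to be where the real idea lies.
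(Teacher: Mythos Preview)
Your triangle-freeness argument is fine, and your plan for criticality is reasonable (though note the paper shows \emph{arc}-criticality, not just vertex-criticality: it exhibits $2$-dicolourings of $D_{25}$ minus a forward arc and minus a backward arc, which is what ``$3$-dicritical'' requires).

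The real problem is in your non-$2$-dicolourability argument. Your pigeonhole gives three forward arcs of the same colour, and you claim that since any three edges of $C_5$ contain two sharing a vertex, you can then produce a monochromatic $4$-cycle ``exactly as in Lemma~\ref{lem:blowup}''. But in $\vec{C_5}$ every vertex has in-degree and out-degree $1$, so two forward arcs whose heads (or whose tails) lie in the same pack are automatically between the \emph{same} pack-pair. The only genuinely new ``shared pack'' case is two consecutive arcs, say $(0,a)\to(1,a)$ and $(1,b)\to(2,b)$ with $a\neq b$, and here the four coloured vertices induce only the arcs $(0,a)\to(1,a)$, $(1,b)\to(0,a)$, $(1,b)\to(2,b)$, $(2,b)\to(1,a)$ --- an acyclic digraph, no $4$-cycle. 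More structurally: after contracting packs, your three arcs are three of the five arcs of $\vec{C_5}$, and any proper subset of a directed cycle is a directed linear forest; Lemma~\ref{lem:blowup}'s mechanism needs $\aell(\vec{C_5})+1 = 5$ same-colour arcs to force the contracted graph to contain a directed cycle, and three does not suffice.

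The paper's proof takes a completely different route. It analyses the colour split inside each pack (monochromatic, $4$--$1$, or $3$--$2$) and, besides the ``matched $4$-cycles'' you use, relies essentially on \emph{backward $5$-cycles}: directed cycles $u_{1,j_1}\to u_{5,j_5}\to u_{4,j_4}\to u_{3,j_3}\to u_{2,j_2}\to u_{1,j_1}$ that use one vertex from each pack and only backward arcs. These backward cycles are what make the bound $5$ (rather than $9$) work, and your sketch never invokes them.
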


\begin{proof}
     Set $V(D_{25}) = \{u_{i, j} | i\in [5], j\in [5] \}$ and $A(D_{25}) = \{u_{i, j}u_{i+1, j'} \mid i\in [5], j\in [5], j'\in [5], j \neq j'\}\cup \{u_{i+1, j}u_{i, j} \mid i\in [5], j\in [5]\}$ reading the indices modulo 5.  The packs are $U_i=\{u_{i, j} | j\in [5] \}$ for $i\in [5]$.
    Two vertices are called \emph{matched} if the arc between them is forward.
    \begin{figure}[ht]
        \centering
        \includegraphics[height=5.5cm]{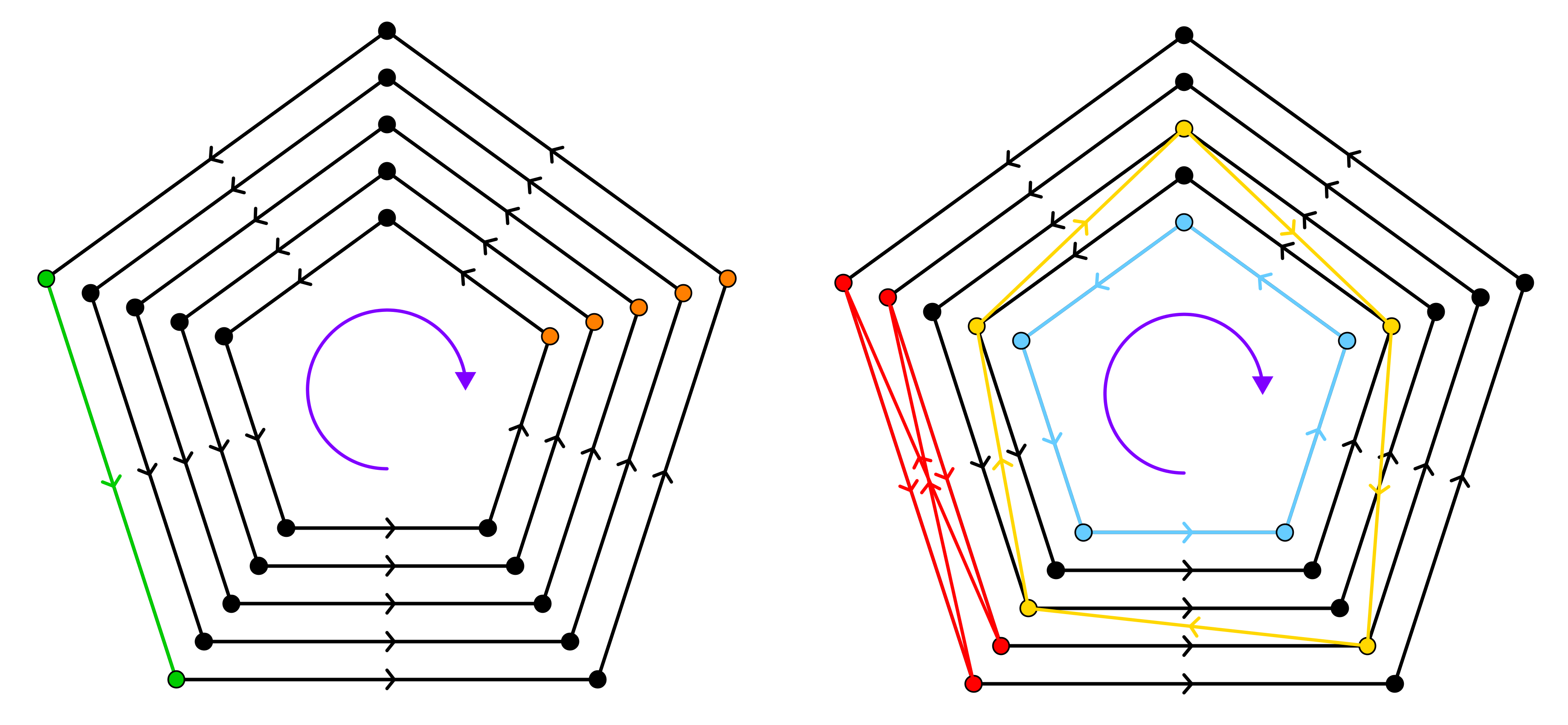}
        \caption{The digraph $D_{25}$. Left (only the forward arcs are represented): in green a pair of matched vertices, and in orange a pack. Right: the three main types of directed cycles.}
        \label{fig:5blowup_C5}
    \end{figure}
    Let us emphasize two types of directed cycles in $D_{25}$.
    \begin{itemize}
    \item \emph{matched cycle}: directed $4$-cycle induced by two pairs of matched vertices (for example the red arcs on Figure~\ref{fig:5blowup_C5} left), formally, $u_{i,j} \to u_{i+1,j} \to u_{i,j'} \to u_{i+1,j'} \to u_{i,j}$ for some $i\in [5]$ and two different indices $j,j'\in [5]$.
    \item \emph {backward cycle}: directed $5$-cycle with five backward arcs (for example the yellow arcs on Figure~\ref{fig:5blowup_C5} left), formally, $u_{1,j_1} \to u_{5,j_5} \to u_{4,j_4} \to u_{3,j_3} \to u_{2,j_2} \to u_{1,j_1}$  for some $j_1, \dots , j_5\in [5]$ such that $j_{i}\neq j_{i+1}$ for all $i\in [5]$. 
    \end{itemize}

    Assume that there is a 2-dicolouring $\phi$ of $D_{25}$.  
    
    First assume that there is a monochromatic pack. Without loss of generality, all vertices of $U_1$ are red. Since there is no monochromatic matched cycle,  both $U_2$ and $U_5$ have at most one red vertex and so at least $4$ blue vertices, and thus $U_3$ and $U_4$ have each at least least three red vertices. \\
    If both $U_2$ and $U_5$ have one red vertex, $u_{2,j_2}$ and $u_{5,j_5}$, then one can choose $j_1 \in [5]\setminus \{j_1, j_5\}$, $j_3\in [5]\setminus\{j_2\}$ and $j_4\in [5]\setminus \{j_3, j_5\}$ so that $u_{1,j_1}$, $u_{3,j_3}$, and $u_{4,j_4}$ are red. Then $u_{1,j_1} \to u_{5,j_5} \to u_{4,j_4} \to u_{3,j_3} \to u_{2,j_2} \to u_{1,j_1}$ is a monochromatic backward cycle, a contradiction. (See Figure~\ref{fig:D25_colo} left.)\\
    Henceforth, one of $U_2, U_5$, say $U_2$, has no red vertex and so five blue vertices.
   Since there is no blue matched cycle between $U_2$ and $U_3$, at least four vertices of $U_3$ are red. But then, since  there are at least three red vertices in $U_4$, there is a red matched cycle between $U_3$ ans $U_4$, a contradiction.
   Henceforth, we may assume that there is no monochromatic pack.
   
    \begin{figure}[ht]
        \centering
        \includegraphics[height=5.5cm]{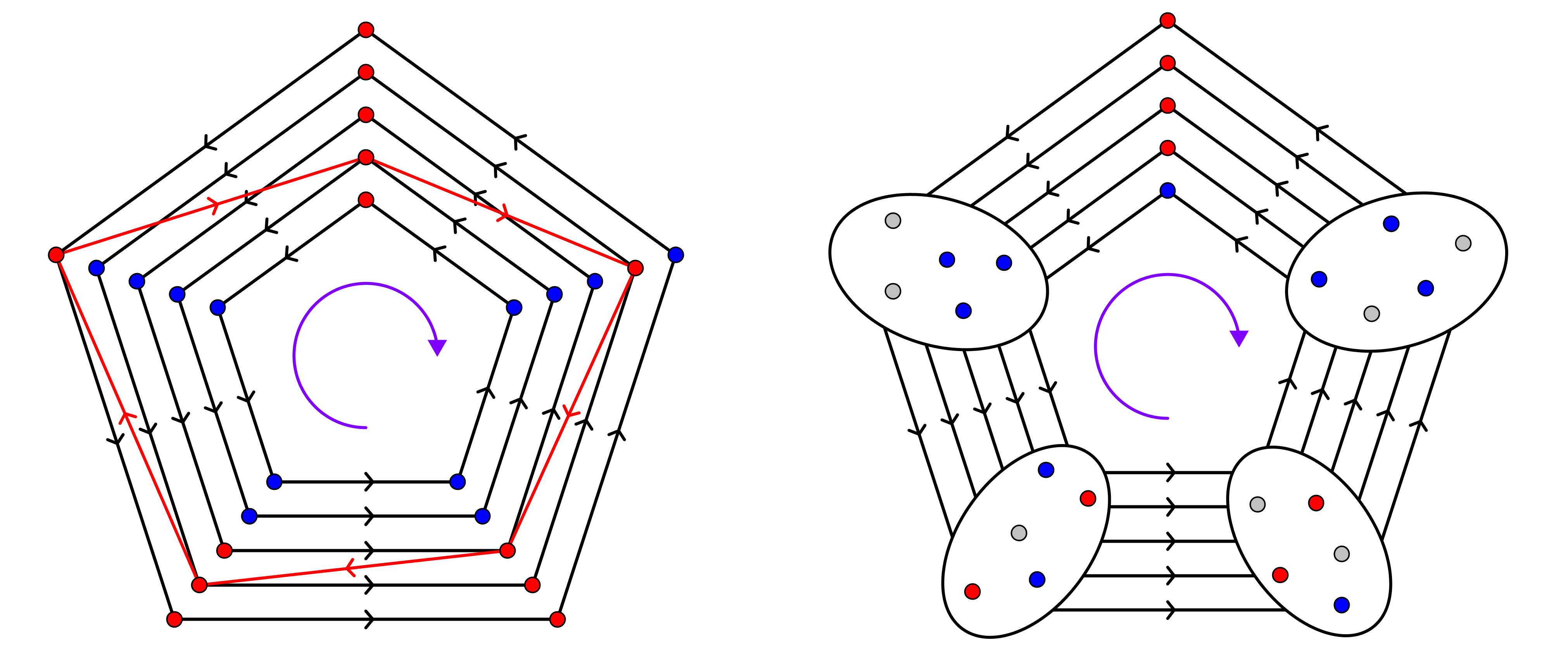}
        \caption{Left: a red backward cycle, when $U_1$ has five red vertices and $U_2$ and $U_5$ at least one.\\
        Right: The case with a pack with a $4$-$1$ colour partition.}
        \label{fig:D25_colo}
    \end{figure}

     Assume that there is a pack, say $U_1$, with four vertices of the same colour, say red. 
     Since there is no red matched cycles, each of $U_2, U_5$ has at least three blue vertices.
     Since there is no blue matched cycles, each of $U_3, U_4$ has at least two red vertices.
     Since no pack is monochromatic, there is a red vertex $u_{2,j_2}$ in $U_2$ and a red vertex $u_{5,j_5}$ in $U_5$.
     There exists $j_1\in [5]\setminus \{j_2, j_5\}$ such that $u_{1,j_1}$ is red. 
     There are no two distinct indices $j_3\in [5]\setminus \{j_2\}$ and $j_4\in [5]\setminus \{j_5\}$ such that $u_{3,j_3}$ and $u_{4,j_4}$ are red for otherwise there would be a red backward cycle.
     Then, necessarily, the two red vertices in $U_3$ are $u_{3,j_2}, u_{3,j_5}$ and the two red vertices in $U_4$ are $u_{4,j_2}, u_{4,j_5}$.
     But then those vertices induce a monochromatic matched cycle, a contradiction.
     
     Henceforth, each pack has two or three vertices in each colour. Without loss of generality,
     $U_1$ has three red vertices.
     Let $u_{2,j_2}$ be is a red vertex in $U_2$.
     Since there are at least two red vertices in each $U_i$, for $i=3$ to  $5$, we can find $j_i\neq j_{i-1}$ such that $u_{i,j_i}$ is red.
     Since $U_1$ has three red vertices, there is $j_1\notin \{j_2, j_5\}$ such that $u_{1,j_1}$ is red. Then $u_{1,j_1} \to u_{5,j_5} \to u_{4,j_4} \to u_{3,j_3} \to u_{2,j_2} \to u_{1,j_1}$ is a backward cycle, a contradiction.
     
     This proves that $D_{25}$ is not $2$-dicolourable.
    
    \medskip
    
    Figure~\ref{fig:D25_crit} depicts a 2-dicolouring of $D_{25}$ minus any vertex, $D_{25}$ minus a forward arc, and  $D_{25}$ minus a backward arc.
    Since all forward (resp. backward) arcs are equivalent, that is there is an automorphism of the graph mapping any forward (resp. backward) arc to any other forward (resp. backward) arc, this shows that $D_{25}$ is 3-dicritical.\\
    \begin{figure}[ht]
        \centering
        \includegraphics[height=4.5cm]{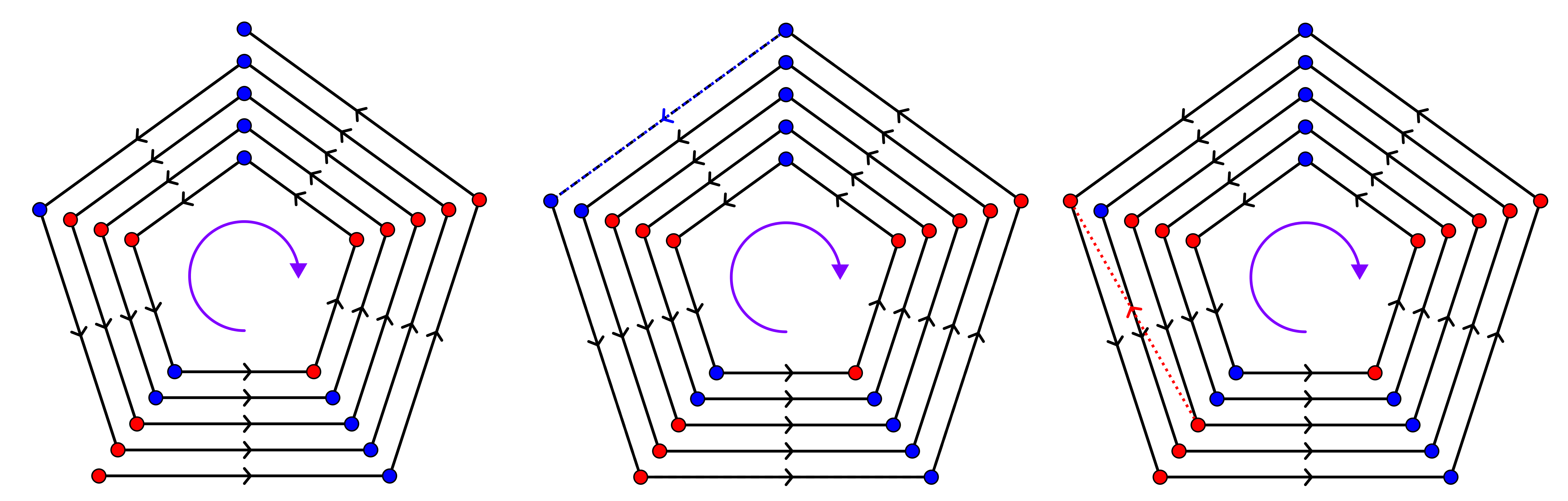}
        \caption{2-dicolouring of $D_{25}$ minus a vertex (left), $D_{25}$ minus a forward arc (dotted blue) (center), $D_{25}$ minus a backward arc (dotted red) (right).}
        \label{fig:D25_crit}
    \end{figure}
\end{proof}


\subsection{Lower bound on the number of vertices of oriented triangle-free graphs with dichromatic number 3}

\begin{lemma}
\label{lem:cut_2colo}
    Let $D$ be a oriented triangle-free graph. If $V(D)$ can be partitioned into three sets, $X$, $Y$ and $Z$ such that : 
    \begin{enumerate}[label=(\roman*)]
        \item $D[X]$ has no directed cycle,
        \item $Y$ is an independent set 
        \item there is no edge between $X$ and $Z$
        \item there is a $2$-dicolouring of $Z$ such that $c^{-1}(1)$ is either empty, a single vertex or two adjacent vertices,
    \end{enumerate}
    then $D$ is $2$-dicolourable.
\end{lemma}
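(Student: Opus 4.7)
My plan is to exploit each of the four conditions in a single explicit 2-dicolouring. Define $\phi \colon V(D) \to \{1,2\}$ by setting $\phi(x) = 2$ for every $x \in X$, $\phi(y) = 1$ for every $y \in Y$, and $\phi(z) = c(z)$ for every $z \in Z$, where $c$ is the 2-dicolouring promised by (iv). The entire proof then consists in checking that the two colour classes induce acyclic subdigraphs.

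The class of colour $2$ is straightforward: $\phi^{-1}(2) = X \cup (Z \cap c^{-1}(2))$. By (iii) there is no edge between $X$ and $Z$, so $D[\phi^{-1}(2)]$ is the disjoint union of $D[X]$, which is acyclic by (i), and $D[Z \cap c^{-1}(2)]$, which is acyclic because $c$ is a 2-dicolouring of $D[Z]$.

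The class of colour $1$ is $\phi^{-1}(1) = Y \cup (Z \cap c^{-1}(1))$, and this is where condition (iv) enters through a short case analysis on the size of $c^{-1}(1)$. If $c^{-1}(1) = \emptyset$, then $\phi^{-1}(1) = Y$ is independent by (ii), hence acyclic. If $c^{-1}(1) = \{z\}$ is a single vertex, then any directed cycle contained in $\phi^{-1}(1)$ must use $z$ exactly once, and all other vertices along the cycle belong to $Y$; but $Y$ is independent, so the cycle would have length $2$, i.e. be a digon, which is impossible in an oriented graph. Finally, suppose $c^{-1}(1) = \{z_1, z_2\}$ with $z_1 z_2 \in A(D)$, say $z_1 \to z_2$ without loss of generality. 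Any directed cycle $C$ in $\phi^{-1}(1)$ that avoids one of $z_1, z_2$ falls into the previous case and is ruled out, so $C$ must contain both. Then $C$ consists of two internally disjoint dipaths between $z_1$ and $z_2$; since $Y$ is independent, each such path has at most one internal vertex. A path of length $1$ between $z_1$ and $z_2$ would, combined with the arc $z_1\to z_2$, create a digon; a path of length $2$ passing through some $y \in Y$ would create a triangle on $\{z_1, z_2, y\}$ in the underlying graph, contradicting triangle-freeness.

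The main (and essentially only) obstacle is this last subcase with two adjacent $Z$-vertices of colour $1$: it is precisely where the triangle-free hypothesis is used, and where the strength of condition (iv) — restricting $c^{-1}(1)$ to be an edge rather than an arbitrary acyclic set — becomes crucial. The other three subcases are short structural observations relying only on the absence of digons and on $Y$ being independent.
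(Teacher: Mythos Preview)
Your proof is correct and follows essentially the same approach as the paper's: define the colouring by $X\mapsto 2$, $Y\mapsto 1$, $Z\mapsto c$, then argue that a monochromatic cycle of colour~1 would force a vertex of $Y$ adjacent to both endpoints of the edge $c^{-1}(1)$, yielding a triangle. Your case analysis is in fact more explicit than the paper's; the only (harmless) imprecision is the blanket claim that a length-$1$ dipath ``combined with the arc $z_1\to z_2$'' gives a digon, which fails when that dipath \emph{is} the arc $z_1\to z_2$ --- but then the return dipath $z_2\to\cdots\to z_1$ still forces a digon or a triangle, so the conclusion stands.
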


\begin{proof}
    Consider the $2$-dicolouring $c$ of $Z$ given by $(iv)$ and extend it by colouring the vertices of $Y$ with colour 1 and the vertices of $X$ with colour 2. There is no monochromatic cycle in $Z$ nor in $X$, and none can intersect $X$ as its neighbourhood is included in $Y$, and therefore has a different colour. So if there were a monochromatic cycle, it would intersect both $Y$ and $Z$, thus have colour 1. $Y$ is pendent so the cycle contains two vertices of $Z$, but as $D$ is triangle-free, no vertex can be adjacent to both colour 1 vertices in $Z$, so there cannot be any monochromatic cycle.
\end{proof}


\begin{lemma}
\label{lem:digraph7}
For every triangle-free digraph $D$ on at most $7$ vertices, there exists a subset $X\subseteq V(D)$ which consists of one vertex or two adjacent vertices such that $D\setminus X$ is acyclic.
\end{lemma}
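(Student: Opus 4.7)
The plan is to analyze a shortest directed cycle of $D$. If $D$ is acyclic then any single vertex $v$ gives $X = \{v\}$ with $D \setminus X$ acyclic, and we are done. Otherwise, I let $C = v_1 v_2 \cdots v_\ell v_1$ be a shortest directed cycle in $D$. Because $D$ is oriented and triangle-free, $\ell \ge 4$. Two vertices of $C$ at distance $2$ along $C$ cannot be adjacent in $D$, for otherwise they would form a triangle with the vertex between them on $C$; in particular, when $\ell \in \{4, 5\}$, every chord of $C$ would connect vertices at distance exactly $2$ on $C$, so $C$ is then an induced cycle in $D$.

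The first natural candidate is $X = \{v_1, v_2\}$. The digraph $D \setminus X$ has $n - 2$ vertices and directed girth at least $\ell$ (deleting vertices cannot create shorter cycles), so it is acyclic whenever $\ell > n - 2$. Since $\ell \ge 4$ and $n \le 7$, this already covers every case except $(n, \ell) \in \{(6, 4), (7, 4), (7, 5)\}$.

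It remains to treat these three cases, which I plan to do by contradiction. Assume no admissible $X$ makes $D \setminus X$ acyclic. Then for each arc $v_i v_{i+1}$ of $C$, the subdigraph $D \setminus \{v_i, v_{i+1}\}$ contains a directed cycle $C_i$ of length at least $\ell$; in the case $(n, \ell) = (7, 5)$, such a $C_i$ has exactly $5$ vertices and thus uses every vertex other than $v_i, v_{i+1}$. Triangle-freeness further restricts how any vertex $x \notin V(C)$ attaches to $V(C)$: if $xv_j$ or $v_jx$ is an arc, then $x$ cannot be adjacent to $v_{j-1}$ or $v_{j+1}$, so the neighborhood of $x$ in $V(C)$ is an independent set in $C$ of size at most $\lfloor \ell/2 \rfloor$. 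I would then combine these constraints with the forced cycles $C_i$ to derive a contradiction in each of the three cases, typically by producing either a chord of $C$, a triangle, or a directed cycle shorter than $\ell$ in $D$.

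The main obstacle is the hardest case $(n, \ell) = (7, 5)$, where two vertices $u, w$ outside $C$ must jointly support all five length-$5$ cycles $C_1, \ldots, C_5$. The bulk of the work will be enumerating the possible in- and out-arcs between $\{u, w\}$ and $V(C)$, and verifying that no such configuration is consistent with the coexistence of all five cycles $C_i$; the cases $(6, 4)$ and $(7, 4)$ should then follow by analogous but simpler reasoning.
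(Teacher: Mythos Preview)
Your high-level reduction via the directed girth is sound and gives a clean reason why only the three boundary cases $(n,\ell)\in\{(6,4),(7,4),(7,5)\}$ matter; this is a genuinely different route from the paper. The paper instead works with the \emph{underlying} graph $G$: after discarding vertices of in- or out-degree~$0$ it splits into (i) $G$ bipartite, where $\delta(G)\ge 2$ forces $G\cong K_{2,5}$ or $G\subseteq K_{3,4}$ and a short pigeonhole argument finds the required pair; and (ii) $G$ non-bipartite, where triangle-freeness gives an undirected $C_5$ or $C_7$ and the analysis proceeds on that backbone. The gain from the paper's approach is that the undirected structure is pinned down first, so the remaining directed checks are short and explicit (a single forced partial orientation in the $C_7$ subcase, for instance). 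Your approach trades this for a slicker opening but leaves the real work in the three cases, which you have not carried out.

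One point to watch: you expect $(7,4)$ to be ``analogous but simpler'' than $(7,5)$, but this is optimistic. With $\ell=4$ there are \emph{three} vertices outside $C$ rather than two, the cycles $C_i$ you obtain after deleting an arc of $C$ may have length $4$ \emph{or} $5$, and these $C_i$ need not pass through all three outside vertices. The enumeration of possible attachments of $\{u_1,u_2,u_3\}$ to $C$ and to one another is therefore larger, not smaller, than the $(7,5)$ enumeration you flag as hardest. You will likely want an additional structural observation (for example, isolating a vertex whose unique in- or out-arc lies on every short cycle through it) rather than a raw enumeration. Until at least one of the three cases is actually executed, the proposal remains a plan rather than a proof.
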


\begin{proof}
    Let $D$ be a triangle-free digraph  on at most 7 vertices of underlying graph $G$. We may assume that $\delta^-(D)\ge 1$ and $\delta^+(D) \ge 1$ (and so $\delta(G)\ge 2$), since a vertex of in- or out-degree $0$ is contained in no directed cycle.
    \begin{itemize}
        \item If $G$ is bipartite, by the minimum degree condition on $G$, either $G$ is isomorphic to $K_{2,5}$, or $G$ is a subgraph of $K_{3,4}$. 
        
        \smallskip
        In the first case, removing any of the two vertices of degree $5$ in $G$ yields an acyclic graph.
        
        \smallskip
        In the second case, let $(X,Y)$ be the bipartition of $G$, with $|X|\le 3$ and $|Y|\le 4$.
        If there is a vertex $x\in X$ with $d^-(x)=1$ or $d^+(x)=1$, let $u_x$ be the neighbour of $x$ through that unique ingoing or outgoing arc. Let $x'\in X$ be an other neighbour of $u_x$ in $G$. Then $D\setminus \{u_x,x'\}$ is acyclic, since there remains at most one vertex in $X$ of in- and out-degree at least $1$.  
        We can now assume that $d_G(x)=4$ for every $x\in X$, i.e. $G$ is isomorphic to $K_{3,4}$. For every vertex $y\in Y$, $d_G(y)= 3$, so either $d^+_D(y)=1$ or $d^-_D(y)=1$. Let $u_y$ be the neighbour of $y$ through that unique ingoing or outgoing arc. By the Pigeonhole Principle, there exists $x\in X$ such that $x=u_y=u_{y'}$ for two distinct vertices $y,y'\in Y$. Let $y''\in Y\setminus \{y,y'\}$. Then $D\setminus \{x,y''\}$ is acyclic, since there remains at most one vertex in $Y$ of in- and out-degree at least $1$.
        
        \item If $G$ is not bipartite, then it contains an odd cycle, which is either a $C_5$ or a $C_7$.
        
        \smallskip
        First, assume that $G$ contains a $C_5$ and no $C_7$. 
        Let $X$ be a set of vertices of $G$ that induces a $C_5$, and $Y\coloneqq V(G)\setminus X$. We have $|Y|\le 2$. The $C_5$ does not have any chord as it would form a triangle, so every other cycle of $G$ intersects $Y$.
        If $Y$ induces an independent set, then every vertex $y\in Y$ has exactly two neighbours in $X$ by the minimum degree condition on $G$. If there is a vertex $x$ of $X$ adjacent to all vertices of $Y$, then $D\setminus \{x\}$ is acyclic. Otherwise, $Y=\{y_1,y_2\}$, and there exist two consecutive vertices $x_1$ and $x_2$ on the $C_5$ such that $x_1\in N_G(y_1)$ and $x_2\in N_G(y_2)$. Then $D\setminus \{x_1,x_2\}$ is acyclic.
        If $Y=\{y_1,y_2\}$ induces an arc, then we cannot have two consecutive vertices $x_1,x_2$ on the $C_5$ such that $x_1\in N_G(y_1)$ and $x_2\in N_G(y_2)$, since this would yield a $C_7$. This implies that $y_1$ and $y_2$ have each exactly one neighbour in $X$, respectively $x_1$ and $x_2$. Then $D\setminus \{x_1,y_1\}$ is acyclic.
        
        \smallskip
        Assume now that $G$ contains a $C_7$. Since $G$ is triangle-free, it is a subgraph of the graph depicted in Figure~\ref{fig:C7} (when forgetting about the orientation). If the removal of $\{u_5,u_6\}$, of $\{u_6,u_0\}$, and of $\{u_0,u_1\}$ leaves a directed cycle, then this forces the partial orientation depicted in Figure~\ref{fig:C7}, or the reverse orientation. So removing $\{u_3\}$ leaves an acyclic digraph.
     \end{itemize}
     \end{proof}
        
        \begin{figure}[hbtp]
        \centering
        \begin{tikzpicture}
        \def\R{2}
        \foreach \i in {0,...,6} {
            \coordinate[fill, circle, scale=0.4] (x\i) at (360*\i/7-90:\R);
            \node at ((360*\i/7-90:1.2*\R) {$u_\i$};
        }
        \foreach \i in {0,...,5} {
            \pgfmathtruncatemacro{\j}{\i+1}
            \draw[thick,->] (x\i) -- (x\j);
        }
        \draw[thick] (x6) -- (x0);
        \draw[->] (x4) -- (x0);
        \draw[->] (x5) -- (x1);
        \draw[->] (x6) -- (x2);
        \end{tikzpicture}
        \caption{The case where $G$ contains a $C_7$}
        \label{fig:C7}
        \end{figure}
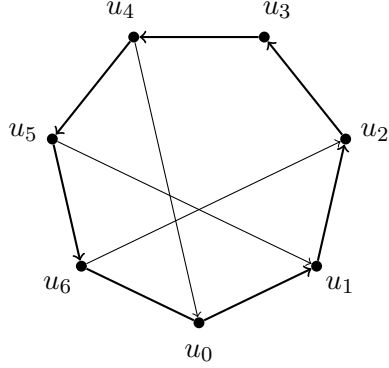

This lemma can be improved as follows.

\begin{lemma}
\label{lem:digraph8}
    Let $D$ be a oriented triangle-free graph on 8 vertices, then one of the following statements holds :
    \begin{enumerate}[label=(\roman*)]
        \item There is an arc $uv \in E(D)$ such that $D-\{u, v\}$ is acyclic.\label{it:2-acyclic}
        \item $D$ is an orientation of either two disjoint $4$-cycles, or the cube, or the cube with two diagonals, or $K_{4,4}$, or $K_{4,4}$ minus an edge, or the subgraph of $K_{4,4}$ with degree sequence $(2,2,4,4), (3,3,3,3)$.
        \label{it:exception}
    \end{enumerate}
\end{lemma}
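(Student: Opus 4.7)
The plan is a structural case analysis on the underlying triangle-free graph $G$ of $D$, using Lemma~\ref{lem:digraph7} as a black box in the low-degree regime. A useful sufficient condition for~(i) is that $G$ has an edge $uv$ with $G-\{u,v\}$ a forest, since then \emph{every} orientation of $G$ makes $D-\{u,v\}$ acyclic; I will call such an edge a \emph{forest-cover edge}.

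I would first reduce to $\delta^{-}(D), \delta^{+}(D) \ge 1$: if some vertex $v$ has in- or out-degree zero, then no directed cycle of $D$ passes through $v$, so Lemma~\ref{lem:digraph7} applied to $D-v$ yields a set $X$ of size at most $2$ (a single vertex or the endpoints of an arc) with $(D-v)-X$ acyclic. If $X$ is an arc, we are done. If $X=\{x\}$, then every directed cycle of $D$ passes through $x$, and any arc of $D$ incident to $x$ gives~(i); if $x$ is isolated in $G$, then $D$ itself is acyclic and any arc works.

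Assume now $\delta(G)\ge 2$ and split on whether $G$ is bipartite. If $G$ is non-bipartite, it contains a shortest odd cycle of length $5$ or $7$ (no $C_3$ by triangle-freeness); a case analysis on this cycle and the $1$ or $3$ remaining vertices, mirroring the non-bipartite part of the proof of Lemma~\ref{lem:digraph7}, always produces a forest-cover edge. Since no graph in~(ii) is non-bipartite, this settles the non-bipartite case. If $G$ is bipartite with parts $(A,B)$, then $(|A|,|B|)\in\{(2,6),(3,5),(4,4)\}$: for $(2,6)$, $\delta\ge 2$ forces $G=K_{2,6}$, and any edge $ab$ with $a\in A$ is a forest-cover edge ($G-\{a,b\}=K_{1,5}$); for $(3,5)$, a short enumeration gives a forest-cover edge for each bipartite triangle-free graph with $\delta\ge 2$ except $K_{3,5}$ itself, for which~(i) follows from a pigeonhole argument: the sum $\sum_{\{i,j\}\subset[3]} \bigl(|N^+(a_i)\cap N^-(a_j)|+|N^+(a_j)\cap N^-(a_i)|\bigr)$ is at most $10$ since every $b\in B$ contributes $d^+_b\cdot d^-_b\le 2$ with $d^+_b+d^-_b=3$, whereas a pair with both sets of size $\ge 2$ would contribute $\ge 4$; so some pair $(a_i,a_j)$ has one side of size $\le 1$, and the arc between $a_k$ ($k\notin\{i,j\}$) and the unique element (or any $b\in B$ if the side is empty) witnesses~(i).

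The main obstacle is the case $(|A|,|B|)=(4,4)$: I would enumerate all bipartite triangle-free graphs on $4+4$ vertices with $\delta\ge 2$, ranging from $2C_4$ ($8$ edges) to $K_{4,4}$ ($16$ edges), and for each either exhibit a forest-cover edge or recognise it as one of the six graphs listed in~(ii). The six exceptions share the property that no edge $uv$ satisfies $d(u)+d(v)\ge |E(G)|-4$---a necessary condition for $G-\{u,v\}$ to be a forest on $6$ vertices---because their maximum-degree vertices cluster on one side of the bipartition and are therefore pairwise non-adjacent; a short direct check for each of the six graphs then produces an orientation witnessing that~(i) genuinely fails. For each non-exceptional graph, an explicit forest-cover edge can be produced through systematic inspection of the bipartite adjacency pattern, with an orientation-dependent pigeonhole argument (analogous to the $K_{3,5}$ case) as a fallback whenever the forest-cover criterion alone does not suffice.
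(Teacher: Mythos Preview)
The paper does not prove this lemma by hand: it reports a computer verification. Using \texttt{nauty}, the authors enumerate the $83$ triangle-free graphs on $8$ vertices with minimum degree at least $2$, discard those possessing what you call a forest-cover edge (leaving $30$ graphs), and then generate all orientations of these $30$; the $998$ orientations that fail~(i) turn out to be exactly the orientations of the six graphs in~(ii). They remark that a hand proof exists when the underlying graph is not a subgraph of $K_{4,4}$, but that it is long and they omit it. So your structural case analysis is attempting something the paper itself chose not to write out.

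Your plan has a concrete gap: the forest-cover-edge criterion is far weaker than you assert. You claim that in the non-bipartite case a forest-cover edge always exists, and that in the $(3,5)$ bipartite case only $K_{3,5}$ lacks one. Both claims are false. The M\"obius ladder $M_8$ (i.e.\ $C_8$ together with its four main diagonals) is triangle-free, non-bipartite, and $3$-regular with $12$ edges; removing any adjacent pair deletes exactly $5$ edges, leaving $7>5$ edges on $6$ vertices, hence never a forest. Similarly, $K_{3,5}$ minus one edge has $14$ edges and maximum adjacent degree-sum $8$, so at least $14-7=7$ edges always survive. Neither graph lies in the list~(ii), so each needs an orientation-dependent argument that your sketch does not supply. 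In total the paper's enumeration shows that $30-6=24$ underlying graphs lack a forest-cover edge yet are not exceptions; you give a genuine argument for only one of them ($K_{3,5}$). The ``pigeonhole fallback analogous to the $K_{3,5}$ case'' is invoked only in the $(4,4)$ subcase and is not developed --- this is precisely the part the authors could not carry out by hand and relegated to the computer.
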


This result has been obtained by enumerating all oriented triangle-free graphs on $8$ vertices and filtering out those that do not satisfy Property~\ref{it:2-acyclic} in Lemma~\ref{lem:digraph8}. More precisely, there are $83$ triangle-free graphs on $8$ vertices with minimal degree at least $2$ (one can generate them using nauty). Removing those that can be made acyclic by removing a pair of adjacent vertices, only $30$ remain.
Finally, by generating all possible orientations of these $30$ graphs, one can check that those that do not satisfy \ref{it:2-acyclic} satisfy \ref{it:exception} (there are $998$ such digraphs).

We note here that we were able to prove Lemma~\ref{lem:digraph8} by hand in the case where $D$ is not a subgraph of $K_{4,4}$. The method is similar to the proof of Lemma~\ref{lem:digraph7} but the proof is long and not particularly enlightening, so we choose not to include it here.

\begin{proposition}[Picasarri-Arrieta~\cite{Pic24}]
    If $\Delta(D) < 6$, then $D$ is $2$-dicolourable.
\end{proposition}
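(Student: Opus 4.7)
The plan is to combine the ordering machinery of Theorem~\ref{thm:order-colouring} with the degree-halving Lemma~\ref{lem:order-half-degree}. First, apply Lemma~\ref{lem:order-half-degree} to $D$: since $\Delta(D) \le 5$, the lemma yields a linear order $\prec$ on $V(D)$ with $\deg_{D[\prec]}(v) \le \lfloor \deg_D(v)/2 \rfloor \le 2$ for every $v \in V(D)$. Hence the backedge graph $D[\prec]$ has maximum degree at most $2$ and is a disjoint union of paths and cycles. By Theorem~\ref{thm:order-colouring}, $\dich(D) \le \chi(D[\prec])$, so it suffices to exhibit an order $\prec$ such that $D[\prec]$ contains no odd cycle.

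In the context of this subsection, $D$ is an oriented triangle-free graph, so no $3$-cycle appears in $D[\prec]$: a $3$-cycle in the backedge graph would project to a triangle in the underlying graph of $D$. Thus the only obstruction to bipartiteness of $D[\prec]$ is an odd cycle of length at least $5$. The main step is to rule these out by a local exchange argument: choose $\prec$ that minimizes the lexicographic potential $(o(\prec),\, b(\prec))$, where $o(\prec)$ counts the odd cycles in $D[\prec]$ and $b(\prec)$ counts its backedges. If $o(\prec) > 0$, then some odd cycle $C$ of length at least $5$ persists in $D[\prec]$. The bound $\Delta(D) \le 5$ tightly restricts how $V(C)$ interacts with $V(D) \setminus V(C)$, and I expect to find either a swap of two consecutive elements of $\prec$, or a reinsertion of a single vertex of $C$ at a new position, that strictly decreases the potential while keeping the back-degree bound $\deg_{D[\prec]}(v) \le 2$ intact.

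The main obstacle is the exchange-step case analysis: one must verify that under $\Delta(D) \le 5$, and for each local configuration around $C$, some such modification is always available and does not create a new odd cycle of length at least $5$ elsewhere in $D[\prec]$. If this direct route turns out to be too intricate, a natural backup plan is probabilistic: sample $\prec$ uniformly at random, use triangle-freeness and $\Delta(D) \le 5$ to count and bound in expectation the number of potential odd cycles of each length in $D[\prec]$, and then conclude via the Lov\'asz Local Lemma (Lemma~\ref{lem:LLL}), with bad events indexed by the short odd cycles of the underlying graph, that some ordering yields a bipartite backedge graph and hence a $2$-dicolouring of $D$ via Theorem~\ref{thm:order-colouring}.
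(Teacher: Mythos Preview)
The paper does not prove this proposition; it is quoted from \cite{Pic24} (a strengthened directed Brooks' theorem for oriented graphs) and used as a black box. There is thus no in-paper argument to compare against, and I assess your proposal on its own.

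What you have written is a plan, not a proof, and the central step is explicitly left undone. The reduction via Lemma~\ref{lem:order-half-degree} and Theorem~\ref{thm:order-colouring} is sound and yields a backedge graph $D[\prec]$ of maximum degree at most $2$; from there you must exhibit, among orders with $\deg_{D[\prec]}\le 2$, one whose backedge graph is bipartite. You do not do this. The exchange argument is only asserted: no concrete local move is given that simultaneously (i) destroys a fixed odd cycle of length $\ge 5$, (ii) preserves the per-vertex bound $\deg_{D[\prec]}(v)\le 2$, and (iii) creates no new odd cycle elsewhere, and there is no evident reason why $\Delta(D)\le 5$ forces such a move to exist. Worse, restricting attention to orders with back-degree at most $2$ may already be fatal: Theorem~\ref{thm:order-colouring} guarantees that \emph{some} order $\prec$ has $D[\prec]$ bipartite whenever $\dich(D)\le 2$, but nothing forces that witness to have small back-degrees, so your restricted search space could in principle contain no bipartite $D[\prec]$ at all.

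The probabilistic backup does not stand as written. Under a uniformly random order the bound $\deg_{D[\prec]}\le 2$ is lost, so $D[\prec]$ is no longer a disjoint union of paths and cycles, and ``contains no odd cycle'' is no longer equivalent to $\chi(D[\prec])\le 2$. Declaring the bad events to be the odd cycles of the underlying graph therefore does not control $\chi(D[\prec])$; you would need entirely different bad events and a different dependency analysis, none of which is supplied. As it stands, neither route constitutes a proof.
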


\begin{corollary}
    Every digraph on at most $14$ vertices is $2$-dicolourable.
\end{corollary}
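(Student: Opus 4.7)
The plan is to combine Picasarri-Arrieta's proposition with Lemmas~\ref{lem:cut_2colo} and~\ref{lem:digraph7}. The statement must be read in the paper's context as concerning oriented triangle-free graphs (an arbitrary tournament of order $14$ is certainly not $2$-dicolourable). So let $D$ be an oriented triangle-free graph on at most $14$ vertices.

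If $\Delta(D) < 6$, then the Picasarri-Arrieta proposition immediately gives $\dich(D) \le 2$ and we are done. Otherwise, I would fix a vertex $v \in V(D)$ of total degree at least $6$ in $D$. Because $D$ is oriented (so every pair of adjacent vertices contributes exactly one arc at $v$), $v$ has at least $6$ distinct neighbours in the underlying graph, and since $D$ is triangle-free, the set $N(v)$ is independent.

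Next, I would set $X \coloneqq \{v\}$, $Y \coloneqq N(v)$, and $Z \coloneqq V(D) \setminus N[v]$, and verify the hypotheses of Lemma~\ref{lem:cut_2colo}. Conditions (i)--(iii) are immediate: $D[X]$ consists of a single vertex and is therefore acyclic; $Y$ is independent by triangle-freeness; and the only arcs incident to $X$ go into $Y$, so there is no edge between $X$ and $Z$. For condition (iv), observe that $|Z| \le 14 - 1 - 6 = 7$, so $D[Z]$ is a triangle-free digraph on at most $7$ vertices, and Lemma~\ref{lem:digraph7} produces a subset $X' \subseteq Z$ consisting of either one vertex or two adjacent vertices such that $D[Z \setminus X']$ is acyclic. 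Colouring $X'$ with colour $1$ and $Z \setminus X'$ with colour $2$ then yields a $2$-dicolouring of $D[Z]$ whose colour-$1$ class is of the form required by (iv), so Lemma~\ref{lem:cut_2colo} concludes the proof.

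No real obstacle is anticipated here: the argument is essentially a direct assembly of the preceding results. The only conceptual point to notice is that a vertex of degree at least $6$ in a triangle-free oriented graph automatically produces an independent neighbourhood that fits exactly the role of the ``buffer'' $Y$ in Lemma~\ref{lem:cut_2colo}, while simultaneously leaving at most $7$ vertices for $Z$, precisely the regime in which Lemma~\ref{lem:digraph7} applies.
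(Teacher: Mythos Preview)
Your argument is correct and matches the paper's proof exactly: split on whether $\Delta(D)<6$ (handled by Picasarri-Arrieta), and otherwise take $X=\{v\}$, $Y=N(v)$ for a vertex $v$ of degree at least~$6$, leaving $|Z|\le 7$ so that Lemma~\ref{lem:digraph7} supplies the dicolouring required by Lemma~\ref{lem:cut_2colo}(iv). The paper's version is simply terser and leaves the $\Delta(D)<6$ case implicit.
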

\begin{proof}
    Take $u \in V$ with $d(u) \geq 6$. Then $X = \{u\}$ and $Y = N(u)$ satisfy the hypothesis of Lemma~\ref{lem:cut_2colo} since $|V \setminus (X \cup Y)| \leq 7$.
\end{proof}

This bound can be improved. Using Lemma~\ref{lem:digraph8}, we checked by computer calculus that every graph on at most $17$ vertices has a decomposition $(X,Y,Z)$ as described in Lemma~\ref{lem:cut_2colo}, and is therefore $2$-dicolourable, without having to consider their possible orientations. 
Using the fact that a $3$-dicritic graph must have minimum degree at least $4$ and arboricity at least $3$, we have proceeded as follows : 
We have enumerated all biconnected triangle-free graphs on $n\le 17$ vertices with minimum degree at least $4$ and maximum degree at most $n - 9$ (if $v$ has degree at least $n - 8$, then $X = \{v\}$, $Y = N(v)$ and $Z = V \setminus (X \cup Y)$ satisfy the hypothesis since $|Z| \leq 7$) using nauty. Then we filtered out all those of arboricity at most $2$.
For each graph $G$ obtained during this enumeration, we have computed several candidates for $(X,Y)$ by first fixing $X=\{u\}$ and $Y=N(u)$ for some vertex $v\in V(G)$, and adding to $X$ all vertices $u'\in V(G)$ whose neighbourhood is contained in $N(u)$ (since $G$ is triangle-free, $X$ and $Y$ are independent sets). We have kept the couple $(X,Y)$ for with $|X|+|Y|$ is maximised, and there remained to check whether $Z\coloneqq V(G) \setminus (X\cup Y)$ satisfied condition $(iv)$ of Lemma~\ref{lem:cut_2colo}.
It turns out that, when $n\le 17$, condition $(iv)$ is systematically satisfied for the set $Z \subset V(G)$ constructed as above because we have $\#\{z \in Z : \deg(z) \ge 2\} \le 8$, and when equality holds $G[Z]$ is not one of the exceptions listed in Lemma~\ref{lem:digraph8}.
More precisely for $n = 17$, we found $375$ graphs with arboricity at least $3$, among which $362$ led to a $Z$ with at most $7$ vertices, $12$ on with $8$ vertices and one with $10$ vertices. For all $Z$ with $8$ vertices, $G[Z]$ was not one of the exceptions listed in Lemma~\ref{lem:digraph8}. And the one on $10$ vertices has $2$ vertices with degree $1$, and the graph obtained by deleting them was not an exception.

\begin{lemma}
    Any $3$-dicritical oriented triangle-free graph has at least 18 vertices.
\end{lemma}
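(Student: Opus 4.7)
The plan is to prove the contrapositive: every oriented triangle-free graph $D$ on $n \le 17$ vertices is $2$-dicolourable. The main engine is Lemma~\ref{lem:cut_2colo}, so the whole strategy consists in exhibiting, for every such $D$, a partition $(X,Y,Z)$ of $V(D)$ meeting its four conditions. Crucially, conditions $(i)$--$(iii)$ depend only on the underlying graph $G$ of $D$, so the search for $(X,Y)$ can be done at the level of $G$; only condition $(iv)$ involves the orientation, and it is automatic whenever $|Z|\le 7$ by Lemma~\ref{lem:digraph7}, or whenever $|Z|=8$ and $D[Z]$ is not one of the exceptional oriented graphs listed in Lemma~\ref{lem:digraph8}\ref{it:exception}.

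The first step is to reduce to a manageable family of underlying graphs. Assume toward a contradiction that $D$ is a $3$-dicritical oriented triangle-free graph on $n\le 17$ vertices. Then standard dicriticality arguments (in the same spirit as for $k$-critical graphs) force $G$ to be biconnected with $\delta(G)\ge 4$ and arboricity at least $3$. Moreover, if some vertex $u$ has $d_G(u)\ge n-8$, then choosing $X=\{u\}$, $Y=N(u)$ (an independent set since $G$ is triangle-free, with no edges to $Z=V\setminus (X\cup Y)$) already gives $|Z|\le 7$, and Lemma~\ref{lem:digraph7} supplies the required $2$-dicolouring of $D[Z]$; so one can further assume $\Delta(G)\le n-9$.

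The second step is a finite enumeration. Generate (using \texttt{nauty}) all biconnected triangle-free graphs $G$ on $n\le 17$ vertices with $4\le \delta(G)$ and $\Delta(G)\le n-9$, and keep only those of arboricity at least $3$. For each such $G$, produce a candidate $(X,Y)$ by picking $u$, setting $X_0=\{u\}$, $Y=N_G(u)$, and then enlarging $X_0$ to $X$ by adding every vertex $u'$ with $N_G(u')\subseteq Y$. Triangle-freeness guarantees that $X$ is independent (so $G[X]$, and therefore $D[X]$, is acyclic) and that there is no edge from $X$ to $Z:=V(G)\setminus (X\cup Y)$. Optimising the choice of $u$ to maximise $|X|+|Y|$ then shrinks $Z$.

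The third step is to verify condition $(iv)$ on $Z$ by computer inspection. The claim, which one checks directly on the enumeration, is that for every $G$ surviving the reduction the resulting $Z$ satisfies $\#\{z\in Z:\deg_G(z)\ge 2\}\le 8$, and whenever this count equals $8$ the induced graph $G[Z]$ (restricted to its non-pendant vertices) is \emph{not} isomorphic to the underlying graph of any of the exceptional oriented digraphs in Lemma~\ref{lem:digraph8}\ref{it:exception}. Pendant vertices of $D[Z]$ lie on no directed cycle and can be ignored. Hence Lemma~\ref{lem:digraph7} applies when $|Z|\le 7$, and Lemma~\ref{lem:digraph8}\ref{it:2-acyclic} applies when $|Z|=8$, yielding in both cases a set $X'\subseteq Z$ of one vertex or two adjacent vertices such that $D[Z\setminus X']$ is acyclic; colouring $X'$ with colour $1$ and $Z\setminus X'$ with colour $2$ verifies $(iv)$. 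The single remaining graph on $n=17$ for which $|Z|=10$ has two degree-$1$ vertices in $Z$; discarding them reduces to the $|Z|=8$ case, which again escapes the exceptional list. Invoking Lemma~\ref{lem:cut_2colo} yields $\vec\chi(D)\le 2$, contradicting $3$-dicriticality. The main obstacle is purely computational: ensuring the enumeration is complete and that the $|Z|=8$ instances are correctly compared against the exceptions of Lemma~\ref{lem:digraph8}; the mathematical content is entirely absorbed by Lemmas~\ref{lem:cut_2colo}, \ref{lem:digraph7}, and~\ref{lem:digraph8}.
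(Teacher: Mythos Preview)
Your proposal is correct and follows essentially the same computer-assisted approach as the paper: reduce to biconnected triangle-free graphs with $\delta\ge 4$, $\Delta\le n-9$, arboricity $\ge 3$, enumerate them with \texttt{nauty}, build $(X,Y)$ from a vertex $u$ augmented by all $u'$ with $N(u')\subseteq N(u)$, and verify condition~$(iv)$ on $Z$ via Lemmas~\ref{lem:digraph7} and~\ref{lem:digraph8} (including the handling of the single $|Z|=10$ case for $n=17$). One tiny slip: when you write $\deg_G(z)\ge 2$ you mean $\deg_{G[Z]}(z)\ge 2$, since $\delta(G)\ge 4$ makes the former trivially satisfied; your subsequent remark about pendant vertices of $D[Z]$ shows you have the right notion in mind.
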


We stopped at $n=17$ only because of the time needed for the enumeration of triangle-free graphs --- using a laptop, the enumeration for $n=17$ took a week. Since we could find a decomposition with $X$ independent and $Z$ small for each graph, we believe that our lower bound is far from optimal. It might be possible to improve it with more computational power.

\section*{Acknowledgements}
This work was partially supported by the french Agence Nationale de la Recherche under contract Digraphs ANR-19-CE48-0013-01 and DAGDigDec (JCJC)   ANR-21-CE48-0012 and by the group Casino/ENS Chair on Algorithmics and Machine Learning.

\end{document}